\titleformat{\section}{\bfseries}{\thesection .}{0.5em}{}
\titleformat{\subsection}{\itshape}{\thesubsection .}{0.5em}{}
\begin{document}

\newtheorem{theorem}{Theorem}[section]  
\newtheorem{example}{Example}[section]            
\newtheorem{algorithm}{Algorithm}[section]

\newtheorem{axiom}{Axiom}[section]
\newtheorem{property}{Property}[section]

\newtheorem{proposition}[theorem]{Proposition}%
\newtheorem{corollary}[theorem]{Corollary}%
\newtheorem{definition}[theorem]{Definition}%
\newtheorem{lemma}{Lemma}[section]
\newtheorem{cor}{Corollary}[section]
\newtheorem{remark}{Remark}[section]
\newtheorem{condition}{Condition}[section]
\newtheorem{conclusion}{Conclusion}[section]
\newtheorem{assumption}{Assumption}[section]

\title{$C$-existence families, $C$-semigroups and their associated abstract Cauchy problems in complete random normed modules}
\author{
{\normalsize  Xia Zhang, Leilei Wei and Ming Liu\thanks{
\textbf{CONTACT} Ming Liu; E-mail: liuming@tiangong.edu.cn.} }\\

\textit{\small School of Mathematical Sciences, Tiangong University, Tianjin 300387, P.R.China}}




\date{}
\maketitle

\renewcommand{\baselinestretch}{1.2}
\large\normalsize
\noindent \rule[0.5pt]{14.5cm}{0.6pt}\\
\noindent
\textbf{Abstract}\\
 In this paper, we first introduce the notion of a (mild) $C$-existence family in complete random  normed modules, then we prove that a (mild) $C$-existence family can guarantee the existence of the (mild) solutions of the associated abstract Cauchy problem in the random setting. Second, we investigate several important properties peculiar to locally almost surely  bounded $C$-semigroups in complete
random  normed modules, which are not involved in the classical theory of $C$-semigroups. Finally, based on the above work, some relations among $C$-existence families, $C$-semigroups and their associated abstract Cauchy problems in complete random  normed modules are established, which extend and improve some known results. Besides,
an application to a type of stochastic differential equations is also given.

\vspace{0.3cm}
\noindent
Keywords: Random normed modules, $C$-semigroups, Locally almost surely bounded, Abstract Cauchy problems

\noindent \rule[0.5pt]{14.5cm}{0.6pt}\\
\noindent
\textbf{Mathematics Subject Classification (2021 MSC):} 46B09, 46H25, 60H20
\section{Introduction}\label{sec1}
\label{intro}
Suppose that $X$ is a Banach space and $A$ is a closed linear operator on $X$, then the abstract Cauchy problem on $X$ is described as the following form
$$~~~~~~~~~~~~~~~~~~~~~~~~~~~~~~~~~~~~~~~~~~
\left\{\begin{array}{l}
\frac{d z(t)}{d t}=A z(t),~~ \forall t\geq 0, \\
z(0)=z_{0}.
\end{array} \qquad  \quad~~~~~~~~~~~~~~~~~~~~~~~~~~~~~~~~~~~~~~~~~~~~~~~~~~~~~~~~~~~~~~~~~~~~~~~~~~~~~~~~~~~~~~~~~~~~~~~~~~~~~~~~~~~~~~~~~~~~~~~~~~~~~~~~~~~~~~(*)\right.
$$
It is well known that  $A$ generating a $C_{0}$-semigroup corresponds to the abstract Cauchy problem ($\ast$) having a unique mild solution, for all initial data $z_{0}$ \cite{Engel2000,Pazy}.
What should be done when the abstract Cauchy problem ($\ast$) is not well-posed or does not have a mild solution for all initial data? Ill-posed problems arise naturally (see \cite{Pay}).
There are two ways one can choose. One can look for initial data in the original space that yield solutions. Or one can renorm a subspace in such an approach that $A$, when restricted to that space, generates a $C_{0}$-semigroup.
Just as stated in \cite{deL1}, $C$-existence families and $C$-semigroups have provided a simple and powerful tool for either way, and the studies on such families of operators have also achieved great success\cite{deL1,deL2,deL3}.

Considering the fundamental importance of the classical theory of operator semigroups, a recent development of interest lay in extension of the classical theory of operator semigroups to random normed modules (briefly, $RN$ modules). The notion of an $RN$ module is a random generalization of that of a normed space. Thanks to Guo and Gigli's independent pioneering contributions \cite{B,C,Guopost,Guorelations,Guo20241}, the theory of $RN$ modules has obtained a systematic and deep development for the past thirty years, and has been successfully applied to several important fields such as random equations \cite{GuoZhang20,Skorohod,LM,LZD}, conditional risk measures \cite{Guoprogress} and nonsmooth differential geometry on metric measure spaces \cite{B,C,Guo2024,E}. The theory of operator semigroups on $RN$ modules began with the literature \cite{Guozhang}, where in order to establish the Stone representation theorem of a semigroup of strongly continuous unitary operators on complex random inner product modules,  Guo and Zhang demonstrated the fundamental theorem of calculus for an $L^{0}$-Lipschitz function from a finite real closed interval to a complete $RN$ module. In fact, such a  fundamental theorem of calculus plays an important role in the study of the theory of operator semigroups on $RN$ modules. For example, using such a fundamental theorem of calculus, in 2012, Zhang studied some mean ergodic semigroups of random linear operators in \cite{Zhangjapan}, and in 2013, Zhang and Liu further investigated some almost surely (briefly, a.s.) bounded semigroups of linear operators in \cite{Z-L}. Subsequently, in 2019, Thang, Son and Thinh first established the Hille-Yosida generation theorem for contraction $C_0$-semigroups of continuous module homomorphisms in \cite{Thang}. In 2020, Zhang, Liu and Guo further established the Hille-Yosida generation theorem for a.s. bounded $C_0$-semigroups of continuous module homomorphisms by a different way in \cite{Z-L2019}. Also in 2020, Zhang et al. investigated the abstract Cauchy problems with respect to $C_0$-semigroups in complete $RN$ modules in \cite{ZhangCauchy}. In 2024, Son, Thang and Oanh first studied the exponentially bounded $C$-semigroups and their associated abstract Cauchy problems in complete $R N$ modules \cite{Son2023}. The purpose of this paper is to continue to study the theory of operator semigroups in complete $R N$ modules.

Clearly, an exponentially bounded  $C$-semigroup in a complete $R N$ module is locally a.s. bounded. However, due to Son, Thang and Oanh's work in 2024, a locally a.s. bounded $C$-semigroup in a complete $R N$ module may not be exponentially bounded.
Moreover, we know that an ordinary $C$-semigroup in a Banach space is automatically locally bounded. Thus it is necessary to study the locally a.s. bounded $C$-semigroup in a complete $R N$ module and such a locally a.s. bounded $C$-semigroup is also the natural generalization of an ordinary $C$-semigroup in a Banach space.
The main difficulty of this paper is that we are forced to work out new techniques to give several properties peculiar to locally a.s. bounded $C$-semigroups in a complete $R N$ module, which is not involved in the classical $C$-semigroup theory. Besides, the discussion with respect to the locally $L^{0}$-Lipschitz property is also a difficult point of this paper.

\par
This paper contains six sections: in Section 2, we will present some preliminaries; in Section 3, we will introduce the notion of a (mild) $C$-existence family in complete $RN$ modules and present some basic results on such $C$-existence families; in Section 4, we will give several important properties peculiar to locally  a.s. bounded $C$-semigroups in complete $RN$ modules; then, based on the above work, in Section 5, some relations among $C$-existence families, $C$-semigroups and their associated abstract Cauchy problems in the random setting are established; in Section 6, we will give an application to a type of stochastic differential equations.

\section{Preliminaries}\label{sec2}
Throughout this paper, we always assume that $(\Omega,\mathcal F,P)$ denotes a given probability space, $K$ the scalar field $R$ of real numbers or $\mathbb{C}$ of complex numbers, $N$ the set of natural numbers.
Moreover, $L^{0}(\mathcal F,K)$ denotes
the algebra of equivalence classes of $K$-valued $\mathcal F$-measurable random variables on $\Omega$ and $\bar{L}^{0}(\mathcal F,R)$ the set of equivalence classes of generalized real-valued $\mathcal F$-measurable random variables on $\Omega$. It is known from \cite{Dunford} that $\bar{L}^{0}(\mathcal F,R)$ is a complete lattice under the partial order $\leq:$ $f \leq g$ if and only if $f^0(\omega) \leq g^0(\omega)$ for almost all $\omega$ in $\Omega$, where $f^0$ and $g^0$ are arbitrarily chosen representatives of $f$ and $g$ in $\bar{L}^{0}(\mathcal F,R)$. Besides, for each nonempty subset $G$ of $\bar{L}^{0}(\mathcal F,R)$, let $\bigvee G$ and $\bigwedge G$ denote the supremum and infimum of $G$, respectively, then there are two sequences $\{\xi_{n},~n\in N\}$ and ${\{\eta_{n},~n\in N}\}$ in $G$ such that $\bigvee\limits_{n\geq1}$ $\xi_{n}=\bigvee G$ and $\bigwedge\limits_{n\geq1}$ $\eta_{n}=\bigwedge G$. Further, $L^0(\mathcal{F}, R)$, as a sublattice of $\bar{L}^0(\mathcal{F}, R)$, is complete in the sense that every subset with an upper bound has a supremum.


 As usual, $I_A$ denotes the characteristic function of $A$ for any $A \in \mathcal{F}$ and $\tilde{I}_A$ denotes the equivalence class of $I_A$. Besides, for any $f$ and $g$ in $\bar{L}^0(\mathcal{F}, R)$, $f>g$ means $f \geq g$ and $f \neq g$, and for any $D \in \mathcal{F}$, $f>g$ on $D$ means  $f^0(\omega)>g^0(\omega)$ for almost all $\omega \in D$, where $f^0$ and $g^0$ are arbitrarily chosen representatives of $f$ and $g$, respectively. Let $A=\left\{\omega \in \Omega \mid f^0(\omega)>\right.$ $\left.g^0(\omega)\right\}$, then we always use $[f>g]$ for the equivalence class of $A$ and often write $I_{[f>g]}$ for $\tilde{I}_A$, one can also understand such notations as $I_{[f \leq g]}, I_{[f \neq g]}$ and $I_{[f=g]}$. In particular, we denote $L^{0}_{+}(\mathcal {F})=\{\xi\in L^{0}(\mathcal {F},R)\,|\,\xi\geq 0\}$ and $L^{0}_{++}(\mathcal {F})=\{\xi\in L^{0}(\mathcal{F},R)\,|\,\xi> 0 ~\textrm{on}~  \Omega \}.$

Due to \cite{Guopost}, an ordered pair $(X,\|\cdot\|)$ is called an $RN$ module over $K$ with base $(\Omega,{\mathcal F},P)$ if $X$ is an $L^{0}({\mathcal F},K)-$module and $\|\cdot\|$ is a mapping from $X$ to $L^{0}_{+}(\mathcal {F})$ such that the following three axioms hold.

($RN-$1)\, $\|\xi x\|=|\xi|\cdot\|x\|$ for any $\xi\in L^{0}({\mathcal F},K)$ and $x\in X$;

($RN-$2)\, $\|x+y\|\leq \|x\|+\|y\|$ for any $x,y\in X$;

($RN-$3)\, $\|x\|=0$ implies $x=\theta$ (the null in $X$), where the mapping $\|\cdot\|$ is called an
$L^{0}-$norm of $X$ and $\|z\|$ is called the
$L^{0}-$norm of $z$ in $X$.

\par
Following the idea of Schweizer and Sklar for probabilistic normed spaces\cite{SS},
a topology is usually introduced on an $RN$ module $(X,\|\cdot\|)$ as follows: for any $\varepsilon>0$ and $0<\lambda<1$, set $U_{\theta}(\varepsilon,\lambda)=\{z\in X~|~P\{\omega\in \Omega~|~\|z\|(\omega)<\varepsilon\}>\lambda\},$ then the family $\{U_{\theta}(\varepsilon,\lambda)~|~\varepsilon>0,~0<\lambda<1\}$ becomes a local base for some metrizable linear topology, which is called the $(\varepsilon,\lambda)$-topology induced by the $L^{0}-$norm on $X$. In the sequel of this paper, given an $RN$ module $(X,\|\cdot\|)$, we always assume that $(X,\|\cdot\|)$ is endowed with the above $(\varepsilon,\lambda)$-topology. Besides, observing that a sequence $\{z_{n},~n\in N\}$ in $X$ converges to some $z_{0}\in X$ in the $(\varepsilon,\lambda)$-topology if and only if the sequence $\{\|z_{n}-z_{0}\|,~n\in N\}$ in $L^{0}({\mathcal F},R)$ converges to 0 in probability $P$.
\par
Let $(X_{1},\|\cdot\|_{1})$ and $(X_{2},\|\cdot\|_{2})$ be two $RN$ modules over $K$ with base $(\Omega,{\mathcal F},P)$. A linear operator $T$ from $X_{1}$ to $X_{2}$ is said to be a.s. bounded if there is an $\eta\in L^{0}_{+}(\mathcal {F})$ satisfying $\|Tz\|_{2}\leq \eta\cdot\|z\|_{1}$ for any $z\in X_{1}$. Denote by $B(X_{1},X_{2})$ the $L^{0}({\mathcal F},K)-$module of a.s. bounded linear operators from $X_{1}$ to $X_{2}$, and define a mapping $\|\cdot\|:B(X_{1},X_{2})\rightarrow L^{0}_{+}(\mathcal {F})$ by $\|T\|:=\bigwedge\{\eta\in L^{0}_{+}(\mathcal{F})~|~\|Tz\|_{2}\leq \eta\cdot\|z\|_{1}~\textrm{for any } z\in X_{1}\}$ for any $T\in B(X_{1},X_{2})$, then one can obtain that $(B(X_{1},X_{2}),\|\cdot\|)$ is still an $RN$ module.

\par
Proposition 2.1 below shows the exact relation between an a.s. bounded linear operator and a continuous module homomorphism on an $RN$ module.

\begin{proposition}[\cite{Guorelations}]\label{proposition2.1}
Let $(X_{1},\|\cdot\|_{1})$ and $(X_{2},\|\cdot\|_{2})$ be two $RN$ modules over $K$ with base $(\Omega,{\mathcal F},P)$ and $T$ a linear operator from $X_{1}$ to $X_{2}$. Then the following statements hold.

(1) $T$ belongs to $B\left(X_1, X_2\right)$ if and only if $T$ is a continuous module homomorphism from $X_1$ to $X_2$;

(2) If $T$ belongs to $B\left(X_1, X_2\right)$, then $\|T\|=\bigvee\left\{\|T x\|_2: x \in X_1\right.$ and $\left.\|x\|_1 \leq 1\right\}$, where 1 stands for the unit element of $L^0(\mathcal{F}, R)$.

\end{proposition}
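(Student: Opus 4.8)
The plan is to prove the two equivalences by exploiting the two structural features peculiar to $RN$ modules: that an a.s.\ bounded (equivalently, continuous module-homomorphic) operator must commute with multiplication by characteristic functions $\tilde{I}_A$, and that countable families of elements can be glued together (countable concatenation). Part (2) will fall out of the construction used for the harder direction of (1), so I would organize the argument as: first the easy direction $T\in B(X_1,X_2)\Rightarrow T$ is a continuous module homomorphism; then a single computation that simultaneously yields the converse and the norm formula.

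For the direction $T\in B(X_1,X_2)\Rightarrow$ continuous module homomorphism, continuity is immediate: from $\|Tz_n-Tz_0\|_2=\|T(z_n-z_0)\|_2\le\eta\|z_n-z_0\|_1$ and the fact that multiplication by the fixed $\eta\in L^0_+(\mathcal F)$ preserves convergence in probability, $z_n\to z_0$ forces $Tz_n\to Tz_0$. The module-homomorphism property is the substantive point. I would first show $T(\tilde{I}_Ax)=\tilde{I}_AT(x)$ for every $A\in\mathcal F$: since $\|T(\tilde{I}_Ax)\|_2\le\eta\tilde{I}_A\|x\|_1$, the image $T(\tilde{I}_Ax)$ vanishes off $A$, and writing $x=\tilde{I}_Ax+\tilde{I}_{A^c}x$ and multiplying $Tx$ by $\tilde{I}_A$ gives the claim. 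Linearity over $K$ then upgrades this to simple scalars $\xi=\sum_{i}\lambda_i\tilde{I}_{A_i}$, and finally approximating an arbitrary $\xi\in L^0(\mathcal F,K)$ by simple functions and invoking continuity (of $T$ and of scalar multiplication in the $(\varepsilon,\lambda)$-topology) yields $T(\xi x)=\xi T(x)$ in full.

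For the converse, I would set $\eta:=\bigvee\{\|Tx\|_2:\|x\|_1\le1\}\in\bar{L}^0_+(\mathcal F)$ and argue in two steps. First, \emph{if} $\eta\in L^0_+(\mathcal F)$, then $T$ is a.s.\ bounded with bound $\eta$: for $z\in X_1$ put $B=[\|z\|_1>0]$, note $\|Tz\|_2=0$ on $B^c$, and on $B$ normalize by $\xi=\tilde{I}_B(\|z\|_1)^{-1}\in L^0(\mathcal F,K)$, so that $\|\xi z\|_1=\tilde{I}_B\le1$ and hence $\eta\ge\|T(\xi z)\|_2=|\xi|\,\|Tz\|_2$, giving $\|Tz\|_2\le\eta\|z\|_1$ on $B$; here I use $T(\xi z)=\xi Tz$, which is available because $T$ is a module homomorphism. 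The same $\eta$ is then seen to equal $\|T\|$ (it is a bound, so $\|T\|\le\eta$; and $\|Tx\|_2\le\|T\|$ whenever $\|x\|_1\le1$, so $\eta\le\|T\|$), which is exactly statement (2).

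The hard part is the second step, showing $\eta<+\infty$ almost surely. I would argue by contradiction: suppose $A=[\eta=+\infty]$ has $P(A)>0$, and choose (using that suprema in $\bar{L}^0$ are attained along sequences) a family $\{x_n\}$ with $\|x_n\|_1\le1$ and $\bigvee_n\|Tx_n\|_2=\eta$. For each $k$, partition $A$ into disjoint pieces $D_n^{(k)}\subseteq[\|Tx_n\|_2>k^2]$ and glue the corresponding scaled elements into $z_k=\tfrac1k\sum_n\tilde{I}_{D_n^{(k)}}x_n$; the countable concatenation exists by completeness, and $\|z_k\|_1\le\tfrac1k$, so $z_k\to\theta$. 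Using continuity together with the fact that $T$ commutes with each $\tilde{I}_{D_n^{(k)}}$, one computes $Tz_k=\tfrac1k\sum_n\tilde{I}_{D_n^{(k)}}Tx_n$, whence $\|Tz_k\|_2>k$ on $A$; this contradicts $Tz_k\to T\theta=\theta$. Thus $\eta\in L^0_+(\mathcal F)$ and $T\in B(X_1,X_2)$. I expect the delicate points to be the justification that $T$ passes through the infinite concatenation (the partial sums converge, and continuity applies) and the bookkeeping of the $k$-dependent partitions; the characteristic-function identity from the first part is what makes both go through.
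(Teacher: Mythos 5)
Since Proposition 2.1 carries no proof in this paper at all --- it is imported verbatim from \cite{Guorelations} --- there is no internal argument to compare against, and your attempt has to be judged on its own merits. Most of it is sound: the easy direction (a.s.\ boundedness gives continuity, then the identity $T(\tilde I_Ax)=\tilde I_AT(x)$ via the vanishing-off-$A$ argument, then simple functions and density give full $L^0$-homogeneity), the normalization step with $\xi=\tilde I_B(\|z\|_1)^{-1}$, and the identification of $\eta=\bigvee\{\|Tx\|_2:\|x\|_1\le 1\}$ with $\|T\|$ (statement (2)) are all correct. (One small imprecision: your inequality $\|Tx\|_2\le\|T\|$ tacitly assumes $\|T\|$ is itself an a.s.\ bound; this is true because the set of bounds is closed under finite minima, but the cleaner route is simply that $\eta$ is a lower bound of every a.s.\ bound, hence $\eta\le\|T\|$.)

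There is, however, one genuine gap: Proposition 2.1 is stated for \emph{arbitrary} $RN$ modules, with no completeness hypothesis, yet your contradiction step constructs $z_k=\tfrac1k\sum_{n}\tilde I_{D_n^{(k)}}x_n$ as a countable concatenation whose existence you justify ``by completeness.'' In a non-complete $RN$ module this series need not converge, so $z_k$ need not exist, and your argument as written proves only the weaker statement in which $X_1$ is assumed complete. The repair is easy and shows completeness is never needed: the sets $D_n^{(k)}$ are disjoint and cover $A=[\eta=+\infty]$ up to a null set, so choose $N_k$ with $P\bigl(\bigcup_{n\le N_k}D_n^{(k)}\bigr)>P(A)/2$ and take the \emph{finite} sum $z_k=\tfrac1k\sum_{n\le N_k}\tilde I_{D_n^{(k)}}x_n$, which exists in any $RN$ module; then $\|z_k\|_1\le\tfrac1k$ while $\|Tz_k\|_2>k$ on a set of probability greater than $P(A)/2$, which already contradicts $Tz_k\to\theta$ in probability. (Alternatively, use the fact that $\{\|Tx\|_2:\|x\|_1\le1\}$ is directed upwards --- glue $x_1,x_2$ along $[\|Tx_1\|_2\ge\|Tx_2\|_2]$, a two-term concatenation --- to replace the sum by a single element.) With this one modification your proof is complete and valid for the proposition exactly as stated.
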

\par
Let $[s,t]$ be a finite closed real interval and $X$ an $RN$ module over $K$ with base $(\Omega,{\mathcal F},P)$. A function $f:[s,t]\rightarrow X$ is said to be $L^{0}$-Lipschitz on $[s,t]$ if there is an $\eta\in L^{0}_{+}(\mathcal{F})$ such that $\|f(s_{1})-f(s_{2})\|\leq \eta|s_{1}-s_{2}|$ for any $s_{1},s_{2}\in[s,t].$
Further, a function $f:[s,t]\rightarrow X$ is $L^{0}$-Lipschitz on $[s,t]$ if and only if $f$ satisfies the difference quotient a.s. bounded assumption, i.e.,
$\bigvee\{\|\frac{f(s_{1})-f(s_{2})}{s_{1}-s_{2}}\|~|~s_{1},~s_{2}\in[s,t]~\textrm{and}~s_{1}\neq s_{2}\}$ belongs to $L^{0}_{+}({\mathcal F}).$

\begin{proposition}[\cite{Guozhang}]\label{proposition2.2}
Suppose that $X$ is a complete $RN$ module and $f:[s,t]\rightarrow X$ is continuously differentiable. If $f$ is $L^{0}$-Lipschitz on $[s,t]$, then $f^{'}$ is Riemann integrable on $[s,t]$ and
$\int_{s}^{t}f^{'}(u)du=f(t)-f(s).$
\end{proposition}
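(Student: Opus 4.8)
The plan is to prove the statement in two stages: first that $f'$ is Riemann integrable on $[s,t]$, and then that its integral equals $f(t)-f(s)$. Before starting I would record two consequences of the hypotheses. Since $f$ is continuously differentiable, $f'$ is continuous on the compact interval $[s,t]$, and because the $(\varepsilon,\lambda)$-topology is metrizable, $f'$ is uniformly continuous there: for every $\varepsilon_{0}>0$ and $\lambda_{0}\in(0,1)$ there is a $\delta>0$ with $P([\,\|f'(u)-f'(u')\|>\varepsilon_{0}\,])<\lambda_{0}$ whenever $|u-u'|<\delta$. Second, letting $s_{2}\to s_{1}$ in the $L^{0}$-Lipschitz inequality and using continuity of $\|\cdot\|$ gives the a.s. bound $\|f'(u)\|\leq\eta$ for all $u\in[s,t]$, where $\eta\in L^{0}_{+}(\mathcal{F})$ is the Lipschitz constant. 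This uniform dominating function is what will let me pass from the $(\varepsilon,\lambda)$-topology to almost-sure control.

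For integrability I would verify the Cauchy criterion in the $(\varepsilon,\lambda)$-complete module $X$. Writing $S(P)=\sum_{i}f'(\xi_{i})(u_{i}-u_{i-1})$ for a tagged partition $P$ and comparing $P$ with a refinement $P'$ (and two arbitrary partitions through their common refinement), the difference $S(P)-S(P')$ is a finite sum $\sum_{i,j}(f'(\xi_{i})-f'(\xi'_{ij}))\cdot(\text{length})$ in which each pair of tags lies within $\mathrm{mesh}(P)$. I would split $\Omega$ by the sets $B_{ij}=[\,\|f'(\xi_{i})-f'(\xi'_{ij})\|>\varepsilon_{0}\,]$: off $B_{ij}$ the terms contribute at most $\varepsilon_{0}(t-s)$, while on $B_{ij}$ I bound them by $2\eta\cdot(\text{length})$. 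Uniform continuity gives $P(B_{ij})<\lambda_{0}$, so the expected total length of the exceptional sets is at most $\lambda_{0}(t-s)$; a Markov estimate together with truncating $\eta$ on $[\eta\leq k]$ then makes the exceptional contribution small in probability. Hence $\{S(P)\}$ is Cauchy and, by completeness, $f'$ is integrable. The essential point is that this sum structure never requires a supremum over the continuum of $u$.

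For the formula I would put $F(u)=\int_{s}^{u}f'(v)\,dv$ and show $F'=f'$. Using $\tfrac{1}{h}(F(u+h)-F(u))-f'(u)=\tfrac{1}{h}\int_{u}^{u+h}(f'(v)-f'(u))\,dv$ and the triangle inequality for the integral, its $L^{0}$-norm is at most $\tfrac{1}{h}\int_{u}^{u+h}\|f'(v)-f'(u)\|\,dv$, which tends to $0$ in the $(\varepsilon,\lambda)$-topology by exactly the same threshold-plus-truncation argument (the integrand is $\leq\varepsilon_{0}$ off a set of small expected length and is dominated by $2\eta$). Since $F$ is itself $L^{0}$-Lipschitz with constant $\eta$, the function $g:=f-f(s)-F$ is continuous, $L^{0}$-Lipschitz, satisfies $g(s)=\theta$ and $g'\equiv\theta$; concluding $g\equiv\theta$ then yields $\int_{s}^{u}f'=f(u)-f(s)$.

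I expect the main obstacle to be this last step, the mean value inequality ``$g'\equiv\theta$ with $g$ $L^{0}$-Lipschitz implies $g$ constant,'' since it is the one place a genuine connectedness argument on $[s,t]$ is unavoidable while pointwise differentiability only holds in the $(\varepsilon,\lambda)$-topology. I would adapt the classical last-point/supremum argument, controlling the $\omega$-dependent ``time where $\|g(u)-g(s)\|\leq\varepsilon_{0}(u-s)$ fails'' by means of the Lipschitz domination, or alternatively reduce to the scalar $L^{0}$-valued case by applying a separating family of a.s. bounded module homomorphisms in the random dual of $X$ and invoking the $L^{0}$-valued mean value inequality. Throughout, the recurring device---and the reason the $L^{0}$-Lipschitz hypothesis is indispensable rather than a mere convenience---is that an a.s. dominating function $\eta$ is exactly what prevents the low-probability exceptional sets from accumulating across a fine partition.
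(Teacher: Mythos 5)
First, note that the paper itself offers no proof of this proposition --- it is quoted from Guo and Zhang \cite{Guozhang} as a preliminary --- so your argument has to stand on its own. Your first two stages are essentially correct, but they largely re-derive machinery the paper already quotes: once you observe (correctly) that the $L^0$-Lipschitz bound forces $\|f'(u)\|\leq\eta$ for every $u$, so that $\bigvee_{u\in[s,t]}\|f'(u)\|\in L^0_+(\mathcal{F})$, the continuity of $f'$ together with the sufficient condition stated just before Proposition 2.3 already yields Riemann integrability, and Proposition 2.3(2) already yields $F'=f'$ for $F(u)=\int_s^u f'(v)\,dv$. Your partition/Cauchy-criterion and threshold-plus-truncation arguments are sound, but they prove exactly those quoted facts.

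The genuine gap is the step you yourself flag as the main obstacle and then do not carry out: that $g:=f-f(s)-F$, being $L^0$-Lipschitz with $g'\equiv\theta$ and $g(s)=\theta$, must vanish identically. All of the real content of the fundamental theorem of calculus in a complete $RN$ module is concentrated in precisely this ``zero derivative implies constant'' step, and you offer only two candidate strategies in place of a proof. The first (a last-point/supremum argument) does not survive scrutiny as stated: the time at which $\|g(u)-g(s)\|\leq\varepsilon_0(u-s)$ first fails is $\omega$-dependent, whereas the hypothesis $g'(u)=\theta$ is a convergence-in-probability statement at each \emph{deterministic} $u$, with exceptional null sets varying over a continuum of times; one cannot evaluate a derivative at a random time from such data. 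The second strategy (separating points by the random dual and invoking an ``$L^0$-valued mean value inequality'') can in fact be completed --- the random conjugate space does separate points by Guo's Hahn--Banach theorem, and the scalar lemma can be proved, e.g., by truncating on $[\eta\leq k]$, integrating over an arbitrary $D\in\mathcal{F}$ so as to reduce to the classical statement for real Lipschitz functions (via dominated convergence), and then letting $k\to\infty$; alternatively by choosing pathwise-Lipschitz versions and combining Rademacher's theorem with Fubini. But neither ingredient is proved by you nor available in the present paper, so as written the proposal defers exactly the assertion the proposition exists to establish.
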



It is well known that a continuous function from $[s,t]$ to a Banach space is automatically bounded, but a continuous function from $[s,t]$ to a complete $RN$ module $X$ may not be a.s. bounded. Fortunately, a sufficient condition for a continuous function from $[s,t]$ to $X$ to be Riemann integrable has been given, that is, if $f:[s,t]\rightarrow X$ is a continuous function satisfying that $\bigvee\limits_{u\in[s,t]}\ {\|f(u)\|}$ belongs to $L^{0}_{+}(\mathcal {F}),$ then $f$ is Riemann integrable. Based on this fact, Propositions 2.3 and 2.4 below hold.

\begin{proposition}[\cite{Guozhang}]\label{proposition2.3}
Suppose that $f$ is a continuous function from $[s,t]$ to a complete $RN$ module $X$ satisfying $\bigvee\limits_{u\in[s,t]}\ {\|f(u)\|} \in L_{+}^{0}(\mathcal {F}),$ then the following statements hold.

(1) $\|\int_s^t f(u) d u\| \leq \int_s^t\|f(u)\| d u$;

(2) Let $G(l)=\int_s^l f(u) d u$ for any $l \in[s, t]$, then $G$ is differentiable on $[s, t]$ and $G^{\prime}(l)=f(l)$.
\end{proposition}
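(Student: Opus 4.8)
The plan is to prove both parts by approximating the Riemann integral with Riemann sums and passing to the limit in the $(\varepsilon,\lambda)$-topology, using repeatedly that the partial order on $L^0(\mathcal F,R)$ is closed under convergence in probability. As a preliminary observation I would record that the hypothesis $\bigvee_{u\in[s,t]}\|f(u)\|\in L^0_+(\mathcal F)$ guarantees not only that $f$ is Riemann integrable (by the fact quoted just before the statement) but also that $u\mapsto\|f(u)\|$ is a continuous $L^0_+(\mathcal F)$-valued function with an a.s.\ bounded supremum: continuity follows from the reverse triangle inequality $\big|\,\|f(u)\|-\|f(v)\|\,\big|\le\|f(u)-f(v)\|$. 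Hence $u\mapsto\|f(u)\|$ is itself Riemann integrable, so the right-hand sides in (1) and (2) are meaningful.

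For part (1) I would fix a sequence of tagged partitions of $[s,t]$ with mesh tending to zero. For each such partition, axiom $(RN$-$2)$ together with $(RN$-$1)$ (the increments $u_i-u_{i-1}$ being positive constants) gives $\|\sum_i f(\xi_i)(u_i-u_{i-1})\|\le\sum_i\|f(\xi_i)\|(u_i-u_{i-1})$. As the mesh tends to zero the left-hand Riemann sums converge to $\int_s^t f(u)\,du$ in the $(\varepsilon,\lambda)$-topology, so by continuity of the $L^0$-norm their norms converge in probability to $\|\int_s^t f(u)\,du\|$, while the right-hand sums converge in probability to $\int_s^t\|f(u)\|\,du$. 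Passing to the limit and using that $\le$ is preserved under convergence in probability yields (1).

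For part (2) I would first use additivity of the integral to write, for $h\ne0$ with $l+h\in[s,t]$, $\frac{G(l+h)-G(l)}{h}-f(l)=\frac1h\int_l^{l+h}(f(u)-f(l))\,du$, and then apply part (1) to obtain $\|\frac{G(l+h)-G(l)}{h}-f(l)\|\le\frac1{|h|}\int_l^{l+h}\|f(u)-f(l)\|\,du$, the cases $h>0$ and $h<0$ (and the one-sided derivatives at the endpoints) being handled symmetrically. It then remains to prove that this right-hand side tends to $0$ in probability as $h\to0$.

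This last step is the main obstacle, and it is exactly where the random setting departs from the classical Banach-space argument. In a Banach space one bounds the average by $\sup_{u\in[l,l+h]}\|f(u)-f(l)\|$, which vanishes by continuity at $l$; in a complete $RN$ module this shortcut fails, since a function that is continuous in probability can have $\bigvee_{u\in[l,l+h]}\|f(u)-f(l)\|$ bounded away from $0$ for every $h>0$, so the average must be estimated directly. My plan is to exploit the a.s.\ boundedness hypothesis by truncation: writing $M=\bigvee_{u\in[s,t]}\|f(u)\|$, $\Omega_N=[M\le N]$ and $\phi(u):=\|f(u)-f(l)\|$, on $\Omega_N$ the function $\phi$ is uniformly bounded, so its continuity in probability upgrades to continuity in $L^1(\Omega_N)$; a Fubini-type interchange at the level of Riemann sums then gives $E[\,\tilde I_{\Omega_N}\frac1{|h|}\int_l^{l+h}\phi(u)\,du\,]=\frac1{|h|}\int_l^{l+h}E[\tilde I_{\Omega_N}\phi(u)]\,du$, whose integrand is a continuous real function vanishing at $u=l$, so the right-hand side tends to $0$ as $h\to0$. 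Since $P(\Omega\setminus\Omega_N)\to0$ as $N\to\infty$, a standard two-parameter estimate (choose $N$ large to control the complement of $\Omega_N$, then $h$ small) forces $\frac1{|h|}\int_l^{l+h}\phi(u)\,du\to0$ in probability, completing (2). I expect the genuinely delicate points to be justifying the Fubini interchange purely from Riemann-sum convergence and confirming that the a.s.\ boundedness hypothesis is precisely what makes the truncation go through.
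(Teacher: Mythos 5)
Your proposal is correct, but there is nothing in the paper to compare it against line-by-line: Proposition 2.3 is quoted from the reference [Guozhang] (Guo and Zhang) and the paper supplies no proof of it. Judged against the techniques this paper actually uses for its analogous statements, however, your argument is essentially the house method. Part (1) (Riemann sums, the reverse triangle inequality giving continuity of $u\mapsto\|f(u)\|$, continuity of the $L^0$-norm, and closedness of the order $\leq$ under convergence in probability) is routine and correct. For part (2), your key observation---that the Banach-space bound $\frac{1}{|h|}\int_l^{l+h}\|f(u)-f(l)\|\,du\leq\bigvee_{u\in[l,l+h]}\|f(u)-f(l)\|$ is useless here because the right-hand side need not tend to $0$ in probability as $h\to 0$---is precisely the difficulty this line of work is organized around, and your remedy coincides with the one the paper deploys in proving Theorem 3.4 and Lemma 4.2: localize on sets where the a.s. bound is below a fixed level (you use $\Omega_N=[M\leq N]$; the paper uses the disjoint partition $A_{n}=[n-1\leq\xi<n]$, which is equivalent), upgrade continuity in probability to $L^1$-continuity on each such set by bounded convergence, interchange $\int_\Omega$ with $\int_l^{l+h}$, bound the resulting average by the maximum over $[l,l+h]$ of a continuous real function vanishing at $u=l$, and then remove the localization (your two-parameter estimate; the paper instead sums over the countable partition using $\sum_{n}P(A_n)=1$). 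One simplification: you need not re-derive the Fubini-type interchange from Riemann sums, since the function $u\mapsto I_{\Omega_N}\|f(u)-f(l)\|$ is continuous with supremum dominated by the constant $2N\in L^1(\mathcal F,R)$, so Proposition 2.4 of the paper applies verbatim; this is exactly what that proposition is stated for.
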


\begin{proposition}[\cite{Z-L}]\label{proposition2.4} Suppose that $f$ is a continuous function from $[s,t]$ to $L^{0}(\mathcal F,R)$ satisfying $\bigvee\limits_{u\in[s,t]}\ {|f(u)|} \in L^1(\mathcal{F}, R),$ where $L^1(\mathcal{F}, R)=\{\xi\in L^0(\mathcal{F}, R)~|~\int_{\Omega}|\xi|dP<+\infty\}$, then $\int_{\Omega}[\int_{s}^{t}f(u)du]dP=\int_{s}^{t}[\int_{\Omega}f(u)dP]du.$
\end{proposition}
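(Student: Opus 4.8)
The plan is to prove the identity by a Riemann-sum approximation combined with a dominated convergence argument, exploiting that convergence in the $(\varepsilon,\lambda)$-topology on $L^{0}(\mathcal F,R)$ is precisely convergence in probability. First I would check that both sides are meaningful. Writing $M=\bigvee_{u\in[s,t]}|f(u)|\in L^{1}(\mathcal F,R)$, the hypothesis together with the fact recalled just before Proposition~\ref{proposition2.3} guarantees that $f$ is Riemann integrable, so $\int_{s}^{t}f(u)\,du\in L^{0}(\mathcal F,R)$ is defined; applying Proposition~\ref{proposition2.3}(1) with the complete $RN$ module $X=L^{0}(\mathcal F,R)$ gives $|\int_{s}^{t}f(u)\,du|\le\int_{s}^{t}|f(u)|\,du\le(t-s)M\in L^{1}(\mathcal F,R)$, so the left-hand side is a well-defined real number. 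For the right-hand side, each $f(u)$ satisfies $|f(u)|\le M\in L^{1}(\mathcal F,R)$, hence $f(u)\in L^{1}(\mathcal F,R)$ and $g(u):=\int_{\Omega}f(u)\,dP$ is a real number for every $u\in[s,t]$.

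Next I would show that $g$ is continuous on $[s,t]$, which renders the right-hand integral an ordinary Riemann integral. If $u_{n}\to u$ in $[s,t]$, then continuity of $f$ yields $f(u_{n})\to f(u)$ in probability, while $|f(u_{n})|\le M\in L^{1}(\mathcal F,R)$ for all $n$; the dominated convergence theorem then gives $g(u_{n})=\int_{\Omega}f(u_{n})\,dP\to\int_{\Omega}f(u)\,dP=g(u)$, so $g$ is continuous and therefore Riemann integrable on $[s,t]$.

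The core step is to link the two sides through a single sequence of Riemann sums. Fix a sequence of tagged partitions $\{P_{n}\}$ of $[s,t]$ with mesh tending to $0$, writing $P_{n}$ as $s=u_{0}^{(n)}<\cdots<u_{k_{n}}^{(n)}=t$ with tags $\xi_{i}^{(n)}$, and set $S_{n}=\sum_{i}f(\xi_{i}^{(n)})(u_{i}^{(n)}-u_{i-1}^{(n)})\in L^{0}(\mathcal F,R)$. Since $f$ is Riemann integrable, $S_{n}\to\int_{s}^{t}f(u)\,du$ in probability. Moreover $|S_{n}|\le\sum_{i}|f(\xi_{i}^{(n)})|(u_{i}^{(n)}-u_{i-1}^{(n)})\le(t-s)M\in L^{1}(\mathcal F,R)$, so by dominated convergence I obtain $\int_{\Omega}S_{n}\,dP\to\int_{\Omega}[\int_{s}^{t}f(u)\,du]\,dP$. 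On the other hand, linearity of the expectation over a finite sum gives $\int_{\Omega}S_{n}\,dP=\sum_{i}g(\xi_{i}^{(n)})(u_{i}^{(n)}-u_{i-1}^{(n)})$, which is a Riemann sum of the continuous function $g$ and hence converges to $\int_{s}^{t}g(u)\,du=\int_{s}^{t}[\int_{\Omega}f(u)\,dP]\,du$. By uniqueness of limits the two target quantities coincide, which is the asserted equality.

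I expect the dominated convergence step to be the main obstacle: one must upgrade the convergence $S_{n}\to\int_{s}^{t}f(u)\,du$, which a priori holds only in probability, to convergence of the $\Omega$-integrals. This is exactly where the hypothesis $M\in L^{1}(\mathcal F,R)$ is indispensable, since the uniform $L^{1}$-domination converts convergence in probability into convergence of integrals via the usual subsequence argument: every subsequence of $\{S_{n}\}$ admits a further almost surely convergent subsequence on which the classical dominated convergence theorem applies. The remaining points---continuity of $g$ and the identification of $\int_{\Omega}S_{n}\,dP$ as a Riemann sum of $g$---are routine once this convergence machinery is in place.
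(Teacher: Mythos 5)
Your proof is correct, and the key steps all hold up: Riemann integrability of $f$ and of $|f|$ follows from the fact recalled before Proposition 2.3 applied to the complete $RN$ module $L^{0}(\mathcal F,R)$; the uniform domination $|S_{n}|\le (t-s)M$ with $M\in L^{1}(\mathcal F,R)$ legitimately upgrades convergence in probability of the Riemann sums to convergence of their expectations (via the subsequence form of dominated convergence); and continuity of $g(u)=\int_{\Omega}f(u)\,dP$ identifies the limit of $\sum_{i}g(\xi_{i}^{(n)})(u_{i}^{(n)}-u_{i-1}^{(n)})$ as $\int_{s}^{t}g(u)\,du$. Note that this paper does not actually prove Proposition 2.4 --- it is imported without proof from the cited reference \cite{Z-L} --- so there is nothing here to compare against line by line; your Riemann-sum-plus-dominated-convergence argument is the standard route for this Fubini-type interchange, and it is the same truncation/dominated-convergence machinery the authors themselves deploy in the proofs of Theorem 3.4 and Lemma 4.2.
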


\section{$C$-existence families in complete $RN$ modules}\indent
In the sequel of this section, we always assume that $X$ is a complete $RN$ module, $A$ is a module homomorphism from $D(A)$ into $X$ and $[D(A)]$ stands for the $RN$ module $D(A)$ endowed with the graph $L^0$-norm $$\quad\|z\|_{[D(A)]} \equiv\|z\|+\|A z\| \quad(\forall z \in D(A)).$$
Besides, $C([0, \infty),[D(A)])$ denotes the set of continuous functions from $[0, \infty)$ to $[D(A)]$, $C([0, \infty), X)$ the set of continuous functions from $[0, \infty)$ to $X$, and $C^{1}([0, \infty), X)$ the set of continuously differentiable functions from $[0, \infty)$ to $X$.
\par
Given $z \in X$, the abstract Cauchy problem for $A$ with initial data $z$ consists of finding a solution $W(s, z)$ to the following problem
\begin{equation}
\left\{\begin{array}{l}
\frac{d W(s, z)}{d s}=A W(s, z), \quad \forall s>0, \\
W(0, z)=z .
\end{array}\right.
\end{equation}

(a) If a mapping $s \mapsto W(s, z) \in C([0, \infty),[D(A)]) \cap C^1([0, \infty), X)$ satisfies the abstract Cauchy problem (1), then we call it a solution of (1).

(b) If a mapping $s \mapsto W(s, z) \in C([0, \infty), X)$ satisfies that $\bigvee\limits_{u \in[0, s]}\|W(u, z)\|$ belongs to $L_{+}^0(\mathcal{F})$ for each $s \geq 0$ and $z \in X$, and further $v(s, z) \equiv$ $\int_0^s W(u, z) d u \in D(A)$ and $\frac{d v(s, z)}{d s}=A(v(s, z))+z$, i.e.,
$$
W(s, z)=A \int_0^s W(u, z) d u+z
$$
for any $s \geq 0$, then we call it a mild solution of (1). Obviously, $v \in$ $C([0, \infty),[D(A)])$.

\par
The family of continuous module homomorphisms $\{V(t): s\geq 0\}$ is said to be locally a.s. bounded if $\bigvee\limits_{s \in[0, l]}\|V(s)\|$ belongs to $L_{+}^0 (\mathcal {F})$ for any $l>0$.
\par
Throughout this section, $C$ is assumed to be an a.s. bounded linear operator on $X$.
\begin{definition}\label{definition4.1}
The locally a.s. bounded family of continuous module homomorphisms $\{V(s): s \geq 0\}\subseteq B(X)$ is called a mild $C$-existence family for $A$ if the following statements hold.

(1) The mapping $s \mapsto V(s) z$ from $[0, \infty)$ to $X$ is continuous for any $z \in X$;

(2) For any $z \in X$ and $s \geq 0$, we have $\int_0^s V(u) z d u \in D(A)$ and $A \int_0^s V(u) z d u$ $=V(s) z-C z$.
\end{definition}


\begin{definition}\label{definition4.2}
The locally a.s. bounded family of continuous module homomorphisms $\{V(s): s \geq 0\} \subseteq B([D(A)])$ is called a $C$-existence family for $A$ if
the following statements hold.

(1) The mapping $s \mapsto V(s) z$ from $[0, \infty)$ into $[D(A)]$ is continuous for any $z \in D(A)$;

(2) For any $z \in D(A)$ and $s \geq 0$, we have
$$
\int_0^s A V(u) z d u=V(s) z-C z .
$$
\end{definition}\label{definition4.3}
\begin{definition}
Suppose that $\{V(s):s\geq 0\}$ is a mild $C$-existence family for $A$. If the family of continuous module homomorphisms $\{V(s)|_{[D(A)]}: s\geq 0\}$
is also a $C$-existence family for $A$, then $\{V(s):s\geq 0\}$ is called a strong $C$-existence family for $A$.
\end{definition}

\par
Subsequently, the following Theorem 3.4 and Corollary 3.5 are devoted to studying when a mild $C$-existence family becomes a strong $C$-existence family in the random setting.

\begin{theorem}\label{theorem4.5}
Let $A$ be a closed module homomorphism, $\{V(s): s \geq 0\}$ a mild $C$-existence family for $A$, and for any $z\in X$, the mapping $s \mapsto V(s)z$ from $[0, \infty)$ into $X$ is differentiable at $s=s_0$. Then $V(s_0)z$ belongs to $D(A)$ and $A V(s_0)z=\left.\frac{d V(s)z}{d s}\right|_{s=s_0 }$.
\end{theorem}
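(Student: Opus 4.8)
The plan is to realize $V(s_0)z$ as a limit of elements of $D(A)$ whose images under $A$ also converge, and then to invoke the closedness of $A$. The natural approximating family is supplied by the difference quotients of the $D(A)$-valued primitive $v(s)=\int_0^s V(u)z\,du$, whose membership in $D(A)$ and whose image under $A$ are both controlled by the defining relation of a mild $C$-existence family.

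First I would fix $z\in X$ and set $g(s)=V(s)z$ and $v(s)=\int_0^s g(u)\,du$. Since $\{V(s):s\ge 0\}$ is locally a.s. bounded, for every $l>0$ one has $\bigvee_{u\in[0,l]}\|g(u)\|\le\big(\bigvee_{s\in[0,l]}\|V(s)\|\big)\cdot\|z\|\in L^0_+(\mathcal F)$, so $g$ meets the hypothesis of Proposition 2.3. Consequently $g$ is Riemann integrable and, by Proposition 2.3(2), $v$ is differentiable with $v'(s)=g(s)$; hence
$$\frac{v(s_0+h)-v(s_0)}{h}\longrightarrow v'(s_0)=V(s_0)z\qquad(h\to 0)$$
in the $(\varepsilon,\lambda)$-topology.

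Next I would apply $A$ to these difference quotients. By Definition 3.1(2), $v(s)\in D(A)$ and $Av(s)=V(s)z-Cz$ for every $s\ge 0$; since $D(A)$ is a submodule it contains each difference $v(s_0+h)-v(s_0)$, and because $A$ is a module homomorphism,
$$A\!\left(\frac{v(s_0+h)-v(s_0)}{h}\right)=\frac{Av(s_0+h)-Av(s_0)}{h}=\frac{V(s_0+h)z-V(s_0)z}{h}.$$
By the differentiability hypothesis, the right-hand side converges to $\frac{d}{ds}V(s)z\big|_{s=s_0}$ as $h\to 0$. Thus, choosing any sequence $h_n\to 0$ with $s_0+h_n\ge 0$, the elements $x_n:=\frac{v(s_0+h_n)-v(s_0)}{h_n}$ lie in $D(A)$, satisfy $x_n\to V(s_0)z$ and $Ax_n\to\frac{d}{ds}V(s)z\big|_{s=s_0}$. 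Since the $(\varepsilon,\lambda)$-topology is metrizable, closedness of $A$ may be applied sequentially, and it yields $V(s_0)z\in D(A)$ together with $AV(s_0)z=\frac{d}{ds}V(s)z\big|_{s=s_0}$, as required.

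The step demanding the most care is the first convergence: unlike the Banach case, a continuous $X$-valued function need not be a.s. bounded, so Proposition 2.3 is not automatic, and one must genuinely use the local a.s. boundedness of $\{V(s):s\ge 0\}$ to secure the $L^0_+(\mathcal F)$-valued supremum condition before differentiating the integral. The remaining subtlety is purely formal, namely passing from the continuous parameter $h\to 0$ to a sequence so that the sequential form of closedness applies, which is justified by the metrizability of the $(\varepsilon,\lambda)$-topology.
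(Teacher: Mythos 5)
Your proof is correct, and its skeleton coincides with the paper's: both realize $V(s_0)z$ as the limit of difference quotients of the primitive $v(s)=\int_0^s V(u)z\,du$, which lie in $D(A)$ with $A$-images $\frac{1}{h}(V(s_0+h)z-V(s_0)z)$ by Definition 3.1(2), and then conclude via the differentiability hypothesis and the closedness of $A$. The genuine difference lies in how the convergence $\frac{v(s_0+h)-v(s_0)}{h}\to V(s_0)z$ is obtained, and this is where almost all of the paper's proof is spent. You invoke Proposition 2.3(2) (the fundamental theorem of calculus for the primitive of a continuous function with $L^0_+$-valued supremum), after verifying its hypothesis from the local a.s. boundedness of the family --- precisely the point you correctly flag as non-automatic in the random setting. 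The paper instead proves this convergence from scratch: it sets $\xi_{s_0}=\bigvee_{u\in[s_0,s_0+1]}\|V(u)z-V(s_0)z\|$, partitions $\Omega$ into the sets $A_{n,s_0}=[n-1\le \xi_{s_0}<n]$, applies Lebesgue's dominated convergence theorem on each piece, and uses Proposition 2.4 to interchange $\int_\Omega$ with $\int_{s_0}^{s_0+1/k}$, getting $z_k=k\int_{s_0}^{s_0+1/k}V(u)z\,du\to V(s_0)z$ in the $(\varepsilon,\lambda)$-topology (only right-hand quotients $h=1/k$ appear there, which suffices because the hypothesis already makes the $A$-images converge). Your appeal to Proposition 2.3(2) is legitimate, since its hypotheses are exactly what you check, so your route is shorter and reuses stated machinery; what the paper's hands-on argument buys is a self-contained derivation whose partition-plus-dominated-convergence technique is then recycled in the proof of Lemma 4.2. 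Your remark about passing to a sequence $h_n\to 0$ to use closedness sequentially, justified by metrizability of the $(\varepsilon,\lambda)$-topology, is also sound and matches the paper's implicit use.
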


\begin{proof}
Let $s_0 \geq 0$ be fixed, since the family $\{V(s): s \geq 0\}$ is locally a.s. bounded, we have $\bigvee\limits_{u \in[0, s_0]}\|V(u)\| \in L_{+}^0(\mathcal {F})$ and $\bigvee\limits_{u \in[0, s_0+1]}\|V(u)\| \in L_{+}^0(\mathcal {F})$.
Thus $\bigvee\limits_{u \in[s_0, s_0+1]}\|V(u)\| \in L_{+}^0(\mathcal {F}).$
Set $$\xi_{s_0}=\bigvee\limits_{u \in[s_0, s_0+1]}\left\|V(u)z-V\left(s_0\right)z\right\|,$$ then clearly $\xi_{s_0} \in L_{+}^0(\mathcal {F})$.
Take $A_{n,s_0}=\left[n-1\leq \xi_{s_0}<n\right]$ for each $n \in N$, then $A_{n, s_0} \in L_{+}^0(\mathcal {F})$,
$A_{i, s_0} \cap A_{j, s_0}=\varnothing$ for any $i, j \in N$ and $i \neq j$, and further $\sum\limits_{n=1}^{\infty} A_{n, s_0}=\Omega$.
Besides, for each $n \in N, \int_{A_{n, s_0}}\left\|V(u)z-V\left(s_0\right)z\right\| d P$ is continuous with respect to $u$ on $\left[s_0, s_0+1\right]$. In fact, it is sufficient to prove that
$
\int_{A_{n, s_0}}\left\|V(u)z-V\left(s_0\right)z\right\|  d P$
converges to 0 as $u\downarrow 0$ for each $n \in N$. Since, for each $n \in N$,

$$I_{A_{n, s_0}}\left\|V(u)z-V\left(s_0\right)z\right\| \leq n$$
for any $u\in [s_0,s_0+1]$ and $$I_{A_{n, s_0}}\|V(u)z-V(s_0)z\| \rightarrow 0$$ in  probability $P$ as $u\downarrow s_0$, according to Lebesgue's dominated convergence theorem, we have $\int_{A _{n, s_0}}\left\|V(u)z-V\left(s_0\right)z\right\| d P \rightarrow 0$ as $u \downarrow s_0$. For any  $k \in N $, let $z_k \equiv k \cdot \int_{s_0}^{s_0+\frac{1}{k}} V(u)z d u$, then, due to
Proposition 2.4, we have
$$
\begin{aligned}
& \int_{\Omega}\left\|I_{A_{n, s_0}}z_k-I_{A_{n, s_0}} V\left(s_0\right)z\right\| d P \\
& =\int_{\Omega}\left\|k \int_{s_0}^{s_0+\frac{1}{k}} I_{A_{n, s_0}} V(u)z d u-k \int_{s_0}^{s_0+\frac{1}{k}} I_{A_{n, s_0}} V\left(s_0\right)z d u\right\| d P \\
& =k \int_{\Omega}\left\|\int_{s_0}^{s_0+\frac{1}{k}} I_{A_{n, s_0}}\left(V(u)z-V\left(s_0\right)z\right) d u\right\| d P \\
& \leq k \int_{\Omega}\left[\int_{s_0}^{s_0+\frac{1}{k}}\left\|I_{A_{n, s_0}}\left(V(u)z-V\left(s_0\right)z\right)\right\| d u\right] d P \\
& =k \int_{s_0}^{s_0+\frac{1}{k}}\left[\int_{\Omega} \| I_{A_{n, s_0}}V(u)z-V\left(s_0\right)z\| d P\right] d u \\
&\leq\max_{u \in\left[s_0, s_0+\frac{1}{k}\right]}\left\{\int_ {A_{n, s_0}}\left\|V(u) z-V\left(s_0\right) z\right\| d P\right\} \\
& \rightarrow 0 \text { as } k \rightarrow \infty
\end{aligned}
$$
for each $n \in N$,
which shows that $\left\|I_{A_{n, s_0}} z_k-I_{A_{n, s_0}} V\left(s_0\right) z\right\|$ converges to 0 in probability $P$ as $k \rightarrow \infty$ for each $n \in N$. Since $\sum\limits_{n=1}^{\infty} P\left(A_{n, s_0}\right)=P\left(\sum\limits_{n=1}^{\infty} A_{n, s_0}\right)$ $=P(\Omega)=1$, we have $\sum\limits_{n=1}^{\infty}I_{A_{n, s_0}} \cdot z_k$ converges to $\sum\limits_{n=1}^{\infty}I_{A_{n, s_0}} V\left(s_0\right) z$ in the $(\varepsilon, \lambda)$-topology as $k \rightarrow \infty$, i.e., $z_k$ converges to $V(s_0)z$ in the $(\varepsilon, \lambda)$-topology as $k \rightarrow \infty$. Since $\{V(s): s \geq 0\}$ is a mild  $C$-existence family for $A$, one can obtain $A z_k=k\left(V( s_0+\frac{1}{k}\right) z$ -- $V(s_0)z$) for each $k \in N$, and clearly  $A z_k$ converges to $\frac{d V(s)z}{d s}|_{s=s_0}$ as $k \rightarrow \infty$. Further, since $A$ is closed, we have $V(s_0)z\in D(A)$ and $A V(s_0)z=\frac{d V(s)z}{d s}|_{s=s_0}$.
\end{proof}

\begin{corollary}\label{corollary4.6}
Let $A$ be a closed module homomorphism, $\{V(s): s \geq 0\}$ a mild $C$-existence family for $A$. If one of the following statements holds:

(1) the mapping $s \mapsto V(s) z$ belongs to $C^1([0, \infty), X)$  for any $z \in X$ and the family $\left\{\left.V(s)\right|_{[D(A)]}: s \geq 0\right\}$ is locally a.s. bounded;

(2) the mapping $s \mapsto V(s) z$ belongs to $C([0, \infty),[D(A)])$ for any $z \in D(A)$ and the family $\left\{\left.V(s)\right|_{[D(A)]}: s \geq 0\right\}$ is locally a.s. bounded;

(3) $V(s) A$ is contained in $A V(s)$ for any $s \geq 0$,
 then $\{V(s): s \geq 0\}$ is a strong $C$-existence family for $A$.
\end{corollary}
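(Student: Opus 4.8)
The goal is to check, in each of the three cases, that the restricted family $\{V(s)|_{[D(A)]}: s\geq 0\}$ fulfils the two requirements of Definition \ref{definition4.2}, together with membership in $B([D(A)])$ and local a.s.\ boundedness on $[D(A)]$; since $\{V(s):s\geq 0\}$ is assumed to be a mild $C$-existence family, this is precisely what the definition of a strong $C$-existence family requires. Two facts are common to all three cases. First, putting $s=0$ in condition (2) of Definition \ref{definition4.1} and using $A\theta=\theta$ gives $V(0)z=Cz$, i.e.\ $V(0)=C$. Second, whenever $z\in D(A)$ satisfies $V(u)z\in D(A)$ for all $u$ and $u\mapsto AV(u)z$ is continuous with $\bigvee_{u\in[0,s]}\|AV(u)z\|\in L^0_+(\mathcal F)$, the map $u\mapsto AV(u)z$ is Riemann integrable by the integrability criterion recalled before Proposition \ref{proposition2.3}.

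My main device, used uniformly across the three cases, is that the closed module homomorphism $A$ commutes with the Riemann integral. Suppose $z\in D(A)$ enjoys the two properties just listed. Choose a sequence of tagged partitions of $[0,s]$ with mesh tending to $0$ and form the Riemann sums $S_n=\sum_i V(u_i^*)z\,\Delta u_i$. Each $S_n$ lies in $D(A)$ because $V(u_i^*)z\in D(A)$, and $AS_n=\sum_i AV(u_i^*)z\,\Delta u_i$ is a Riemann sum of the integrable map $u\mapsto AV(u)z$; hence $S_n\to\int_0^s V(u)z\,du$ and $AS_n\to\int_0^s AV(u)z\,du$ in the $(\varepsilon,\lambda)$-topology. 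Closedness of $A$ then yields $\int_0^s V(u)z\,du\in D(A)$ and $A\int_0^s V(u)z\,du=\int_0^s AV(u)z\,du$. Combining this with condition (2) of Definition \ref{definition4.1}, namely $A\int_0^s V(u)z\,du=V(s)z-Cz$, gives exactly condition (2) of Definition \ref{definition4.2}: $\int_0^s AV(u)z\,du=V(s)z-Cz$.

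It remains to supply, in each case, the two hypotheses that feed this argument, together with condition (1) of Definition \ref{definition4.2} and the local a.s.\ boundedness on $[D(A)]$. In case (1), the $C^1$ assumption makes $s\mapsto V(s)z$ differentiable everywhere, so Theorem \ref{theorem4.5} gives $V(s)z\in D(A)$ and $AV(s)z=\frac{dV(s)z}{ds}$ for all $s$; the latter is continuous (as $V(\cdot)z\in C^1$) and a.s.\ bounded on $[0,s]$ through the assumed local a.s.\ boundedness of the restriction, and condition (1) of Definition \ref{definition4.2} follows by adding the $X$-continuity of $V(\cdot)z$ to that of $AV(\cdot)z$. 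In case (2), $V(\cdot)z\in C([0,\infty),[D(A)])$ directly supplies $V(u)z\in D(A)$, the continuity of $AV(\cdot)z$, and condition (1); the a.s.\ boundedness and the local a.s.\ boundedness on $[D(A)]$ are assumed. In case (3), the inclusion $V(s)A\subseteq AV(s)$ means $V(s)z\in D(A)$ and $AV(s)z=V(s)Az$ for $z\in D(A)$; this identity reduces the continuity and a.s.\ boundedness of $AV(\cdot)z$, condition (1), and even the local a.s.\ boundedness of the restriction (via $\|V(s)z\|_{[D(A)]}\le\|V(s)\|\,\|z\|_{[D(A)]}$) to the already available properties of the mild family applied to $Az$.

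I expect the principal obstacle to be the rigorous justification of the commutation of the closed operator $A$ with the Riemann integral in the $(\varepsilon,\lambda)$-topology: one must control the simultaneous convergence of the two Riemann-sum sequences along a common sequence of partitions and then invoke closedness, which requires the integrability of both $V(\cdot)z$ and $AV(\cdot)z$ to be set up carefully via the a.s.-bounded-supremum criterion. The subsidiary bookkeeping of transferring local a.s.\ boundedness between $B(X)$ and $B([D(A)])$, and of verifying that all the suprema involved indeed lie in $L^0_+(\mathcal F)$, is routine but must be tracked in each case.
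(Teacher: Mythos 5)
Your proposal is correct and follows essentially the same route as the paper: in each case you establish that $V(\cdot)z$ takes values in $D(A)$ with $AV(\cdot)z$ continuous and locally a.s.\ bounded (via Theorem \ref{theorem4.5} in case (1), directly in case (2), and by reduction to case (2) through the identity $AV(s)z=V(s)Az$ in case (3)), then commute the closed operator $A$ with the Riemann integral to convert the mild-family identity into condition (2) of Definition \ref{definition4.2}. The only difference is that you spell out the Riemann-sum-plus-closedness argument for the commutation step, which the paper simply asserts.
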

\begin{proof}
 For (1), according to Theorem 3.4, one can obtain that $V(s)z \in D(A)$ and $A V(u)z=\frac{d V(u)z}{d u}$ for any $u \geq 0$. Thus the mapping $u \mapsto A V(u)z$ from $[0,\infty)$ to $X$ is continuous, which implies that the mapping $s \mapsto V(s) z$ from $[0, \infty)$ into $[D(A)]$ is continuous for any $z \in D(A)$. Since $A$ is closed and the family $\left\{\left.V(s)\right|_{[D(A)]}: s \geq 0\right\}$ is locally a.s. bounded, it follows that $A\left(\int_0^s V(u)z d u\right)=\int_0^s A V(u)zd u$ for any $z\in D(A)$ , which shows that $\{V(s): s \geq 0\}$ is a strong $C$-existence family for $A$.

For (2), for any $z\in D(A)$, we have $V(s)z \in D(A)$ and $\|V(s)z\|_{[D(A)]}=\| V(s)z\|+\| A V(s)z\|.$ Thus the mapping $s \mapsto A V(s)z$ from $[0,\infty)$ to $X$ is continuous and locally a.s. bounded since the family $\left\{\left.V(s)\right|_{[D(A)]}: s \geq 0\right\}$ is locally a.s. bounded. Consequently, $A\left(\int_0^s V(u)z d u\right)=\int_0^s A V(u)zd u$ for any $z\in D(A)$ since $A$ is closed, which shows that $\{V(s): s \geq 0\}$ is a strong $C$-existence family for $A$.

For $(3)$, it is sufficient to note that hypothesis
$(3)$ implies hypothesis $(2)$.
\end{proof}

\section{$C$-semigroups in complete $RN$ modules}\label{section3}
The main results of this section are Theorems 4.3 and 4.4. The focus is Lemma 4.1, which plays a crucial role in the proof of Theorem 4.3.

\begin{definition}\label{definition3.1}\cite{Son2023}
Let $X$ be a complete $RN$ module, $B(X)$ the set of continuous module homomorphisms on $X$
and $C\in B(X)$ an injective operator on $X$. Then a family $\{V(s):s\geq0\}\subset B(X)$ is called a $C$-semigroup on $X$ if

(1) $V(s)$ is strongly continuous, this is, for any $z \in X$, the mapping $s \rightarrow$ $V(s) z$ from $[0, \infty)$ into $X$ is continuous;

(2) $V(0)=C$;

(3) $C V(s+t)=V(s) V(t)$ for any $s, t \geq 0$.

\end{definition}


\begin{definition}\label{definition3.4}
Let $\{V(s):s\geq0\}$ be a $C$-semigroup on an $RN$ module $X$, and denote by $R(C)$ the range of $C$. Define
$$D(A)=\{z\in X: \lim_{s\downarrow0}\frac{V(s)z-Cz}{s}~exists~and~belongs~to~R(C)\}$$
and
$$Az=C^{-1}\lim_{s\downarrow0}\frac{V(s)z-Cz}{s}$$
for any $z\in D(A)$, then the mapping $A:D(A)\rightarrow X$ is called the generator of $\{V(s):s\geq0\}$, also denoted by $(A,D(A))$ in this paper.
\end{definition}
\par
Let $X$ be an $RN$ module over $K$ with base $(\Omega,{\mathcal F},P)$, it is clear that, for any  $C$-semigroup $\{V(s): s \geq 0\}$,
 $\bigvee\limits_{s \in[0, l]}\left\|V(s) z\right\|$ may not be in $L^{0}_{+}(\mathcal {F}$) for any $z\in X$ and $l>0$. In this paper,
a $C$-semigroup $\{V(s): s \geq 0\}$ is said to be locally a.s. bounded if for any $l>0$, $\bigvee\limits_{s \in[0, l]}\left\|V(s) \right\|$ is in $L_{+}^0(\mathcal {F})$. Besides, $\{V({s}): s \geq 0\}$ is said to be exponentially bounded if there are $\tau \in  L^{0}({\mathcal {F}},R)$ and $W \in L_{+}^0(\mathcal {F})$ satisfying $\|V(s)\|\leq W e^{\tau s}$ for any $s \geq 0$. Furthermore, a function $g$ from $[0, \infty)$ to $X$ is said to be locally $L^{0}$-Lipschitz if for any $l>0$, there is a $\xi_l \in L_{+}^0(\mathcal {F})$ satisfying $\left\|g\left(s_1\right)-g\left(s_2\right)\right\| \leq \xi_l\left|s_1-s_2\right|$ for any $s_1, s_2 \in[0, l]$.

\par
Now, set
$$A_{s}=\frac{V(s)-C}{s}$$
for any $s>0$ and $(A,D(A))$ denotes the generator of $\{V(s): s\geq0\}$, i.e., $Az=C^{-1}\lim\limits_{s\downarrow0}A_{s}z$ for any $z\in D(A)$.
\par
Motivated by the work of \cite{Z-L}, we give Lemma 4.1 below, one will see
that for any $l>0$, $\bigvee\limits_{s\in (0, l]}\|CA_{s}z\|$ is a.s. bounded for any $z\in D(A)$. It is
such a special property of $\{V(s): s \geq 0\}$ that makes some following proofs possible.
In the sequel of this section, we always assume that $(X,\|\cdot\|)$ is a complete $RN$ module.

\begin{lemma}\label{lemma3.5}
Suppose that $\{V(s):s \geq 0\}$ is a locally a.s. bounded $C$-semigroup on $X$ with the generator $(A, D(A))$.
Then, for any  $l>0$, $\bigvee\limits_{s\in (0, l]}\|CA_{s}z\|$ belongs to $L^{0}_{+}({\mathcal {F}})$
for any $z \in D(A)$.
\end{lemma}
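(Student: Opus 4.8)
The plan is to obtain a single a.s.\ bound on $\|CA_sz\|$ that is \emph{uniform} in $s\in(0,l]$, by writing $CA_sz$ as an average of $V(u)CAz$ over $[0,s]$. Fix $l>0$ and $z\in D(A)$, and set $M=\bigvee_{u\in[0,l+1]}\|V(u)\|$, which lies in $L^0_+(\mathcal F)$ by local a.s.\ boundedness; this $M$ will dominate every operator norm appearing below. Two algebraic consequences of the semigroup identities $V(0)=C$ and $CV(s+t)=V(s)V(t)$ will be used repeatedly: taking $t=0$ gives $CV(u)=V(u)C$, so $C$ commutes with each $V(u)$, and the same identity gives $V(u)V(h)=CV(u+h)$. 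Since $z\in D(A)$, the very definition of the generator gives $\lim_{h\downarrow0}A_hz=CAz$ in the $(\varepsilon,\lambda)$-topology.

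The central step is the integral identity
$$\int_0^s V(u)\,CAz\,du=CV(s)z-C^2z\qquad(0\le s\le l).$$
To prove it I would start from $\int_0^s V(u)A_hz\,du$, which is Riemann integrable because $u\mapsto V(u)A_hz$ is continuous with $\bigvee_{u\in[0,s]}\|V(u)A_hz\|\le M\|A_hz\|\in L^0_+(\mathcal F)$. Substituting $A_hz=\frac{V(h)z-Cz}{h}$ and using $V(u)V(h)=CV(u+h)$ together with $V(u)C=CV(u)$ rewrites the integrand as $\frac{1}{h}\bigl(CV(u+h)z-CV(u)z\bigr)$. Pulling the continuous module homomorphism $C$ outside the integral and performing the change of variable $w=u+h$ turns the expression into $\frac{C}{h}\bigl(\int_s^{s+h}V(w)z\,dw-\int_0^h V(w)z\,dw\bigr)$. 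As $h\downarrow0$, Proposition 2.3(2) applied to $t\mapsto\int_0^t V(w)z\,dw$ (whose derivative is $V(t)z$) shows the right-hand side tends to $C\bigl(V(s)z-V(0)z\bigr)=CV(s)z-C^2z$, while on the left I pass to the limit under the integral sign via $\|\int_0^s V(u)(A_hz-CAz)\,du\|\le\int_0^s\|V(u)(A_hz-CAz)\|\,du\le sM\|A_hz-CAz\|$, which converges to $0$ in probability. This establishes the displayed identity.

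With the identity in hand the conclusion is immediate. For $s\in(0,l]$ we have $CA_sz=\frac{1}{s}\bigl(CV(s)z-C^2z\bigr)=\frac{1}{s}\int_0^s V(u)CAz\,du$, so Proposition 2.3(1) yields $\|CA_sz\|\le\frac{1}{s}\int_0^s\|V(u)CAz\|\,du\le\frac{1}{s}\int_0^s M\|CAz\|\,du=M\|CAz\|$. Since this bound does not depend on $s$ and $M\|CAz\|\in L^0_+(\mathcal F)$, the set $\{\|CA_sz\|:s\in(0,l]\}$ is majorized by an element of $L^0_+(\mathcal F)$; by the lattice completeness of $L^0(\mathcal F,R)$ its supremum $\bigvee_{s\in(0,l]}\|CA_sz\|$ therefore belongs to $L^0_+(\mathcal F)$, as required.

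I expect the main obstacle to be precisely the passage to the limit under the integral sign, namely $\int_0^s V(u)A_hz\,du\to\int_0^s V(u)CAz\,du$. This is the point at which the Banach-space reasoning fails, since a continuous $X$-valued function need not be a.s.\ bounded; what rescues the argument is the local a.s.\ boundedness hypothesis, which supplies the single dominating random variable $M\in L^0_+(\mathcal F)$, uniform in $u$, reducing the interchange to the convergence in probability of $\|A_hz-CAz\|$. A secondary technical point I would verify is that $C$ commutes with the Riemann integral (true because $C\in B(X)$ is a continuous module homomorphism and the integral is an $(\varepsilon,\lambda)$-limit of Riemann sums), and that the one-sided derivatives in Proposition 2.3(2) are legitimate at the endpoints $s$ and $0$.
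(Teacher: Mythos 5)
Your proof is correct, but it takes a genuinely different route from the paper's. The paper never integrates: it sets $\eta(s,z)=\bigvee_{t\in[0,s]}\|V(t)A_sz\|$, uses the identity $V(t)V(\tfrac s2)z=CV(t+\tfrac s2)z$ to insert cancelling terms and split $V(t)A_sz$ into two halves involving $A_{s/2}$, and iterates this halving to obtain $\|CA_sz\|\le\eta(s,z)\le\bigvee_{t\in[0,2s]}\|V(t)\|\,\|A_{s/2^n}z\|$ for every $n$; letting $n\to\infty$ and using $A_{s/2^n}z\to CAz$ then gives $\|CA_sz\|\le\bigvee_{t\in[0,2s]}\|V(t)\|\,\|CAz\|$, from which the lemma follows by taking the supremum over $s\in(0,l]$. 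You instead prove the integral representation $CA_sz=\frac1s\int_0^s V(u)\,CAz\,du$ --- essentially a $C$-multiplied form of statement (5) of Theorem 4.4 --- by approximating with $\int_0^s V(u)A_hz\,du$ and passing to the limit on both sides, and then bound it via Proposition 2.3(1). Your route is the classical Banach-space argument transplanted to the random setting, and it is self-contained: it uses only Proposition 2.3, the semigroup identities, and local a.s. boundedness, so there is no circularity with the later results of Section 4 that the paper derives \emph{from} this lemma; the dominating variable $M=\bigvee_{u\in[0,l+1]}\|V(u)\|$ you introduce is precisely what justifies the two limit interchanges, which is the step where local a.s. boundedness is genuinely needed (a continuous $X$-valued function need not be a.s. bounded). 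What the paper's halving argument buys is that it is purely algebraic and lattice-theoretic, requiring no Riemann integration at all, which is why the paper can place this lemma first and build its integral calculus for locally a.s. bounded $C$-semigroups (Theorem 4.3, Lemma 4.2, Theorem 4.4) on top of it; your argument shows the integral identity can be reached directly, at the cost of verifying up front the integrability and limit-interchange details that the paper defers. The two proofs end with comparable uniform bounds, $\bigvee_{t\in[0,2l]}\|V(t)\|\,\|CAz\|$ in the paper versus $M\|CAz\|$ in yours.
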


\begin{proof}
For any $s>0$, set
$$
\eta(s, z)=\bigvee_{t \in [0, s]}\|V(t) A_s z\|
$$
for any $z \in D(A)$, then $\|CA_{s}z\|\leq \eta(s, z)$.

Further, we have
\begin{align}
& \eta(s, z)\nonumber \\\nonumber
& =\bigvee_{t \in [0, s]}\left\|V(t) \frac{V(s)z-Cz}{s}\right\|\\\nonumber
&=\bigvee_{t \in [0, s]}\left\|\frac{CV(t+s)z-CV(t)z}{s}\right\| \\\nonumber
& =\bigvee_{t \in [0, s]}\left\|\frac{V\left(t+\frac{s}{2}\right) V\left(\frac{s}{2}\right)z-CV\left(t+\frac{s}{2}\right)z+V(t) V\left(\frac{s}{2}\right)z-CV(t)z}{s}\right\| \\\nonumber
& \leq \bigvee_{t \in [0,s]}\left\|\frac{V\left(t+\frac{s}{2}\right)\left(V\left(\frac{s}{2}\right)z-Cz\right)}{2\left(\frac{s}{2}\right)}\right\|+\bigvee_{t \in [0, s]}\left\|\frac{V(t)\left(V\left(\frac{s}{2}\right) z-Cz\right)}{2\left(\frac{s}{2}\right)}\right\| \\\nonumber
& =\bigvee_{t \in [0, s]}\left\|\frac{V\left(t+\frac{s}{2}\right) A_\frac{s}{2} z}{2}\right\|+\bigvee_{t \in [0, s]}\left\|\frac{V(t) A_\frac{s}{2} z}{2}\right\| \\\nonumber
& \leq \bigvee_{t \in [0, \frac{3s}{2}]}\|V(t)A_\frac{s}{2} z\|\\\nonumber
& \leq \bigvee_{t \in [0, \frac{3s}{2}+\frac{s}{4}]}\|V(t)A_\frac{s}{4} z\|\\\nonumber
& \leq \cdot\cdot\cdot\\\nonumber
& \leq \bigvee_{t \in [0, 2s]}\|V(t)A_\frac{s}{2^n} z\|\\\nonumber
& \leq \bigvee_{t \in [0, 2s]}\|V(t)\|\|A_\frac{s}{2^n} z\|\\\nonumber
\end{align}
for any $s > 0$, $n\in N$ and $z\in D(A)$. Thus
$$\|CA_{s}z\|\leq \eta(s, z)\leq \bigvee_{t \in [0, 2s]}\|V(t)\|\|A_\frac{s}{2^n} z\|$$
for any $s > 0$, $n\in N$ and $z\in D(A)$. Letting $n\rightarrow\infty$ in the above inequality, one can obtain that
$$\|CA_{s}z\|\leq \bigvee_{t \in [0, 2s]}\|V(t)\|\|CAz\|$$
for any $s > 0$ and $z\in D(A)$.

Consequently, for any  $l>0$,
$$
\begin{aligned}
\bigvee_{s\in (0, l]}\|CA_{s}z\|&\leq \bigvee_{s\in (0, l]}\bigvee_{t \in [0, 2s]}\|V(t)\|\|CAz\|\\
&\leq \bigvee_{t \in [0, 2l]}\|V(t)\|\|CAz\|\\
&\in L^{0}_{+}({\mathcal {F}})
\end{aligned}
$$
for any $z\in D(A)$, which completes the proof of Lemma 4.1.
\end{proof}

\par
Based on Lemma 4.1, we can state Theorem 4.3 below, which will be used in the proof of Theorem 4.4.

\begin{theorem}\label{theorem3.6}
Suppose that $\{V(s):s \geq 0\}$ is a locally a.s. bounded $C$-semigroup on $X$ with the generator $(A, D(A))$. For any $z \in C(D(A))$, define a mapping $g{:}~ [0,+\infty) \rightarrow X$ by $g(s)=CV(s) z$. Then $g$ is locally $L^0$-Lipschitz.
\end{theorem}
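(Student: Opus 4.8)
The plan is to reduce the local $L^0$-Lipschitz estimate to the uniform a.s. bound supplied by Lemma 4.1. I would fix $l > 0$ and $s_1, s_2 \in [0, l]$, and, by the symmetry of $|s_1 - s_2|$, assume $s_1 \geq s_2$ and set $h = s_1 - s_2 \in [0, l]$ (the case $h = 0$ being immediate). Since $z \in C(D(A))$, I would write $z = Cw$ for some $w \in D(A)$; only the existence of such a $w$ is needed.

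The first step is to rewrite the increment $g(s_1) - g(s_2)$ via the two structural identities of a $C$-semigroup. Putting $t = 0$ in the semigroup identity $CV(s+t) = V(s)V(t)$ and using $V(0) = C$ gives the commutation $CV(s) = V(s)C$, while the same identity gives $CV(s_2 + h) = V(s_2)V(h)$. These yield
\begin{align*}
g(s_1) - g(s_2) &= CV(s_2 + h)z - CV(s_2)z \\
&= V(s_2)V(h)z - V(s_2)Cz \\
&= V(s_2)\bigl(V(h) - C\bigr)z = h\, V(s_2) A_h z ,
\end{align*}
so that, since $V(s_2)$ is a.s. bounded,
$$\|g(s_1) - g(s_2)\| \leq h\,\|V(s_2)\|\,\|A_h z\| .$$

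It then suffices to bound the two factors uniformly over $[0, l]$. Local a.s. boundedness gives $\|V(s_2)\| \leq \bigvee_{t \in [0, l]}\|V(t)\| \in L^0_+(\mathcal{F})$. For the second factor I would use the commutation once more: $A_h z = A_h(Cw) = C A_h w$, whence $\|A_h z\| = \|C A_h w\|$, and since $w \in D(A)$ and $h \in (0, l]$, Lemma 4.1 furnishes $\|C A_h w\| \leq \bigvee_{h \in (0, l]}\|C A_h w\| \in L^0_+(\mathcal{F})$. Taking $\xi_l$ to be the product of these two suprema then gives $\|g(s_1) - g(s_2)\| \leq \xi_l |s_1 - s_2|$ for all $s_1, s_2 \in [0, l]$, which is precisely the local $L^0$-Lipschitz property.

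The one genuinely delicate point is the control of $\|A_h z\|$ as $h \downarrow 0$. In the Banach-space theory this factor is bounded near $0$ simply because $A_h z$ converges; here $A_h z$ converges to $C^2 A w$ only in the $(\varepsilon, \lambda)$-topology (that is, in probability), and convergence in probability does not by itself guarantee that $\bigvee_{h \in (0, l]}\|C A_h w\|$ lies in $L^0_+(\mathcal{F})$. Securing exactly this uniform a.s. boundedness is the role of Lemma 4.1, and its invocation is the crux of the argument — everything else is the algebra displayed above.
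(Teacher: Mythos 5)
Your proof is correct and takes essentially the same approach as the paper's: writing $z = Cw$ with $w \in D(A)$, using the identities $CV(s_2+h) = V(s_2)V(h)$ and $CV(s) = V(s)C$ to factor the increment as $h\,V(s_2)\,C A_h w$, and then combining the local a.s. bound on $\bigvee_{t\in[0,l]}\|V(t)\|$ with the bound $\bigvee_{h\in(0,l]}\|CA_h w\| \in L^0_+(\mathcal{F})$ from Lemma 4.1. The paper phrases the estimate as a single supremum over difference quotients rather than pointwise in $s_1, s_2$, but the decomposition and the crucial invocation of Lemma 4.1 are identical, and your closing remark correctly identifies why that lemma (rather than mere convergence in probability of $A_h z$) is the indispensable ingredient.
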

\begin{proof}
Since $\{V(s):s \geq 0\}$ is a locally a.s. bounded $C$-semigroup, we have that $\bigvee_{s \in[0, l]}\left\|V(s) \right\|$ is in $L_{+}^0(\mathcal {F})$ for any $l>0$.
Thus, according to Lemma 4.1, we have
$$
\bigvee\{\|\frac{CV(l_{1})z-CV(l_{2})z}{l_{1}-l_{2}}\|~|~l_{1},~l_{2}\in[0,l]~\textrm{and}~l_{1}\neq l_{2}\}\in L^{0}_{+}({\mathcal{F}})
$$
for any $z\in C(D(A))$ and $l>0$. In fact, for any $z\in C(D(A))$, there is a $y\in D(A)$ such that $z=Cy$, then

$$
\begin{aligned}
&\bigvee\{\|\frac{CV(l_{1})z-CV(l_{2})z}{l_{1}-l_{2}}\|~|~l_{1},~l_{2}\in [0, l]~\textrm{and}~l_{1}\neq l_{2}\}\\
\qquad=&\bigvee\{\|\frac{C^{2}V(l_{1})y-C^{2}V(l_{2})y}{l_{1}-l_{2}}\|~|~l_{1},~l_{2}\in [0, l]~\textrm{and}~l_{1}\neq l_{2}\}\\
\qquad=&\bigvee\{\|\frac{CV(l_{2})\cdot[ V(l_{1}-l_{2})y-Cy]}{l_{1}-l_{2}}\|~|~l_{1},~l_{2}\in [0, l]~\textrm{and}~l_{1}\neq l_{2}\}\\
\qquad\leq &\bigvee_{s \in[0, l]}\left\|V(s) \right\|\cdot\bigvee\{\|\frac{C[ V(l_{1}-l_{2})y-Cy]}{l_{1}-l_{2}}\|~|~l_{1},~l_{2}\in [0, l]~\textrm{and}~l_{1}\neq l_{2}\}\\
\in &L^{0}_{+}({\mathcal{F}})
\end{aligned}
$$
for any $l>0$, which shows that $g$ is locally $L^{0}$-Lipschitz.
\end{proof}

\begin{lemma}\label{lemma3.7}
Suppose that $\{V(s):s\geq0\}$ is a locally a.s. bounded $C$-semigroup on $X$ and its generator is denoted by $(A, D(A))$. If a function $f:[0, \infty) \rightarrow X$ is continuously differentiable and locally $L^0$--Lipschitz, then

(a) $\lim\limits_{s \downarrow 0} \frac{1}{s} \int_0^s V(u) f(u) d u=C f(0)$;

(b) $\int_0^s V(u) f(u) d u \in D(A)$ for any $s \geq 0$ and
$$
A \int_0^s V(u) f(u) d u=V(s) f(s)-C f(0)-\int_0^s V(u) f^{\prime}(u) d u .
$$
\end{lemma}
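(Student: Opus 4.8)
The plan is to prove (a) first and then use it inside the proof of (b). For (a), set $g(u)=V(u)f(u)-Cf(0)$. I would first check that $g$ is continuous and that $\bigvee_{u\in[0,s]}\|g(u)\|\in L^0_+(\mathcal F)$: continuity of $u\mapsto V(u)f(u)$ follows by splitting $V(u)f(u)-V(u_0)f(u_0)$ into $V(u)[f(u)-f(u_0)]$ and $[V(u)-V(u_0)]f(u_0)$ and using strong continuity of $\{V(s)\}$ together with continuity of $f$, while the a.s. boundedness follows from $\|g(u)\|\le\bigvee_{u\in[0,s]}\|V(u)\|\cdot\bigvee_{u\in[0,s]}\|f(u)\|+\|Cf(0)\|$, where both suprema lie in $L^0_+(\mathcal F)$ (the first by local a.s. boundedness, the second because the local $L^0$-Lipschitz property gives $\|f(u)\|\le\|f(0)\|+\xi u$ on $[0,s]$). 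Then Proposition 2.3(2) applies to $F(l)=\int_0^l g(u)\,du$, giving $F'(0)=g(0)=V(0)f(0)-Cf(0)=0$. Since $\frac1s\int_0^s V(u)f(u)\,du=\frac{F(s)}{s}+Cf(0)$ and $\frac{F(s)}{s}=\frac{F(s)-F(0)}{s}\to F'(0)=0$ as $s\downarrow0$, statement (a) follows.

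For (b) write $w(s)=\int_0^s V(u)f(u)\,du$, which exists by the same a.s. boundedness check. To show $w(s)\in D(A)$ I would compute $\lim_{h\downarrow0}\frac{V(h)w(s)-Cw(s)}{h}$ directly from the definition of the generator. Using that $V(h)$ is a continuous module homomorphism (so it commutes with the integral) and the $C$-semigroup identity $V(h)V(u)=CV(h+u)$, one gets $V(h)w(s)=C\int_0^s V(h+u)f(u)\,du=C\int_h^{s+h}V(r)f(r-h)\,dr$ after the substitution $r=u+h$. Subtracting $Cw(s)=C\int_0^s V(r)f(r)\,dr$ and splitting the integrals at $h$ and at $s$ yields, for $0<h<s$,
$$\frac{V(h)w(s)-Cw(s)}{h}=C\Big[\tfrac1h\!\int_h^s\! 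V(r)[f(r-h)-f(r)]\,dr+\tfrac1h\!\int_s^{s+h}\! V(r)f(r-h)\,dr-\tfrac1h\!\int_0^h\! V(r)f(r)\,dr\Big].$$

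I would then dispose of the last two terms and isolate the first as the crux. The third term tends to $Cf(0)$ by part (a). For the second I would write $f(r-h)=f(r)+[f(r-h)-f(r)]$; the contribution of $f(r)$ is $\frac{1}{h}\int_s^{s+h}V(r)f(r)\,dr=\frac{H(s+h)-H(s)}{h}\to H'(s)=V(s)f(s)$, where $H(l)=\int_0^l V(r)f(r)\,dr$ (Proposition 2.3(2)), while the remaining contribution has $L^0$-norm $\le \xi\,h\,\bigvee_{r\in[s,s+1]}\|V(r)\|\to0$ using $\|f(r-h)-f(r)\|\le\xi h$. Hence, granting that the first term tends to $-\int_0^s V(r)f'(r)\,dr$, the whole limit equals $C\big[V(s)f(s)-Cf(0)-\int_0^s V(r)f'(r)\,dr\big]$, which lies in $R(C)$; since $C$ is injective this shows $w(s)\in D(A)$ and, applying $C^{-1}$, gives the claimed formula for $Aw(s)$.

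The main obstacle is the first term, i.e.\ proving $\frac1h\int_h^s V(r)[f(r-h)-f(r)]\,dr\to-\int_0^s V(r)f'(r)\,dr$. After bounding $\|V(r)\|$ by $M=\bigvee_{r\in[0,s]}\|V(r)\|\in L^0_+(\mathcal F)$ and discarding the $O(h)$ piece $\int_0^h V(r)f'(r)\,dr$, this reduces to showing $\int_0^s\psi_h(r)\,dr\to0$ in probability, where $\psi_h(r)=\big\|\tfrac{f(r-h)-f(r)}{h}+f'(r)\big\|$. For each fixed $r$ we have $\psi_h(r)\to0$ in probability as $h\downarrow0$ (since $f$ is differentiable), and the local $L^0$-Lipschitz property together with $\|f'(r)\|\le\xi$ gives the uniform domination $\psi_h(r)\le2\xi$. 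Because $\xi$ need not be integrable, I cannot integrate over $\Omega$ directly; instead I would localize exactly as in the proof of Theorem 3.4, setting $A_n=[n-1\le\xi<n]$, so that $I_{A_n}\psi_h(r)\le2n$ is bounded. On each $A_n$, Proposition 2.4 lets me exchange $\int_\Omega$ and $\int_0^s$, and two applications of the classical dominated convergence theorem (first in $\omega$ for fixed $r$, then in $r$ on $[0,s]$) give $\int_\Omega[\int_0^s I_{A_n}\psi_h(r)\,dr]\,dP\to0$, hence $\int_0^s I_{A_n}\psi_h(r)\,dr\to0$ in $L^1$ and so in probability. Finally, since $\sum_n P(A_n)=1$, piecing the $A_n$ together (the concatenation step used in Theorem 3.4) yields $\int_0^s\psi_h(r)\,dr\to0$ in probability, which completes the evaluation of the first term and thus the proof.
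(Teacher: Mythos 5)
Your proposal is correct, and its treatment of the crux of part (b) --- showing $\frac{1}{h}\int_h^s V(r)[f(r-h)-f(r)]\,dr\to-\int_0^s V(r)f'(r)\,dr$ --- is the same technique as the paper's: dominate the difference quotients via the local $L^0$-Lipschitz constant, localize on a countable partition $A_n=[n-1\le\xi<n]$ so the dominating random variable becomes bounded, apply the classical dominated convergence theorem (first in $\omega$, then in $r$), exchange $\int_\Omega$ and $\int_0^s$ via Proposition 2.4, and concatenate over the $A_n$ using $\sum_n P(A_n)=1$; the paper does exactly this, only it partitions on $\eta_s=\bigvee\|CV(u)\frac{f(u-h)-f(u)}{h}+CV(u)f'(u)\|$ with $V$ kept inside, whereas you first pull out $M=\bigvee_{r\in[0,s]}\|V(r)\|$ and partition on the Lipschitz constant $\xi$ --- an inessential variation. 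Where you genuinely diverge is part (a) and the boundary term: the paper proves (a) by running the same localization/dominated-convergence machinery a second time (about a page), and settles $\frac{1}{h}\int_s^{s+h}V(r)f(r-h)\,dr$ with a ``similarly''; you instead verify once that $u\mapsto V(u)f(u)$ is continuous and a.s. bounded on compact intervals (checks the paper needs implicitly anyway to form its Riemann integrals) and then invoke Proposition 2.3(2): the right derivative of $l\mapsto\int_0^l V(u)f(u)\,du$ at $l=0$ is $V(0)f(0)=Cf(0)$, which is precisely (a), and its right derivative at $l=s$ gives the boundary term, the replacement of $f(r)$ by $f(r-h)$ costing only $O(h)$ by the Lipschitz bound. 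This fundamental-theorem-of-calculus route is shorter and cleaner, with no loss of rigor. One cosmetic point: you write $\int_0^s\psi_h(r)\,dr$ although $\psi_h(r)$ involves $f(r-h)$ and is undefined for $r<h$, so that integral should be kept over $[h,s]$ (the paper's displayed equation (3) commits the same abuse); this does not affect the argument.
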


\begin{proof}
 (a) Let $l>0$ be fixed, since $f:~[0, \infty)\rightarrow X$ is locally $L^0$-Lipschitz, we have that $\bigvee\limits_{u \in[0, l]}\|f(u)\|$ belongs to $L_{+}^0(\mathcal {F})$. Further, since $\{V(s): s \geq 0\}$ is locally a.s. bounded, we have that $\bigvee\limits_{u \in[0, l]}\|V(u)\|$ belongs to $L_{+}^0(\mathcal {F})$.
Thus $\bigvee\limits_{u \in[0, l]}\|V(u) f(u)\|$ belongs to $L_{+}^0(\mathcal {F})$. Set $\xi_l=\bigvee\limits_{u \in[0, l]}\|V(u) f(u)-Cf(0)\|$, then clearly $\xi_l \in L_{+}^{0}(\mathcal {F})$. Take $A_{n, l}=\left[n-1 \leq \xi_l<n\right]$ for each $n \in N$, then one can obtain that $A_{n, l} \in \mathcal {F}$, $A_{i ,l} \cap A_{j ,l}=\emptyset$ for any $i, j \in N$ and $i \neq j$, and further $\sum\limits_{n=1}^{\infty} A_{n, l}=\Omega$.
It is clear that for each $n \in N$,
$\int_{A_{n, l}}\|V(u) f(u)-Cf(0)\| d P$ is continuous with respect to $u$ on $[0, l]$.
In fact, it is sufficient to prove that
$\int_{A_{n, l}}\|V(u) f(u)-Cf(0)\| d P$ converges to 0 as $u \downarrow 0$ for each $n \in N$.
Since, for each $n \in N$,
$I_{A_{n, l}}\|V(u) f(u)-C f(0)\| \leq n$ for any $u\in [0,l]$ and $I_{A_{n, l}}\|V(u) f(u)-Cf(0)\| \rightarrow 0$ in probability $P$ as $u \downarrow 0$, due to Lebesgue's dominated convergence theorem, we have $\int_{A_{n, l}}\|V(u)f(u)-C f(0)\| d P \rightarrow 0$ as $u \downarrow 0$. Consequently, for any $s\in (0, l]$, according to Proposition $2.4$, we have
$$
\begin{aligned}
& \int_{\Omega}\left[\frac{1}{s} \int_0^s\left\|V(u) f(u) I_{A_{n, l}}-C f(0) I_{A_{n, l}}\right\| d u\right] d P \\
&= \frac{1}{s} \int_0^s\left[\int_{A_{n, l}}\|V(u) f(u)-Cf(0)\| d P\right] d u \\
&\leq\max _{u \in\left[0, s\right]}\left[\int_{A_{n, l}}\|V(u) f(u)-Cf(0)\| d P\right]\\
&\rightarrow  0 \text { as $s$} \downarrow 0
\end{aligned}
$$
for each $n \in N$.
Thus $\frac{1}{s} \int_0^s\left\|V(u) f(u) I_{A _{n, l}}-C f(0) I_{A _{n, l}}\right\| d u$ converges to 0 in probability $P$ as $s \downarrow 0$ for each $n \in N$.
Moreover,
$$
\begin{aligned}
& \left\|\frac{1}{s} \int_0^s V(u) f(u) I_{A _{n, l}} d u-C f(0) I_{A _{n, l}}\right\| \\
&=  \left\| \frac{1}{s} \int_0^s [V(u) f(u) I_{A _{n, l}} -Cf(0) I_{A _{n, l}}] d u \right\| \\
&\leq  \frac{1}{s} \int_0^s\left\|V(u) f(u) I_{A _{n, l}}-Cf(0) I_{A _{n, l}}\right\| d u
\end{aligned}
$$
for any $s\in (0, l]$ and $n \in N$, which implies that $\left\|\frac{1}{s} \int_0^s V(u) f(u) I_{A _{n, l}}du-C f(0) I_{A_{n, l}}\right\|$ also converges to 0 in probability $P$ as $s \downarrow 0$. Since $\sum\limits_{n=1}^{\infty} P\left(A_{n, l}\right)=P\left(\sum\limits_{n=1}^{\infty} A _{n, l}\right)=P(\Omega)=1$,
it follows that $\frac{1}{s} \int_0^s V(u) f(u) d u$ converges to $Cf(0)$ in the $(\varepsilon, \lambda)$-topology as $s \downarrow 0$, i.e., $$\lim _{s\downarrow0} \frac{1}{s} \int_0^s V(u) f(u) d u=Cf(0).$$

(b) Let $s>0$ be fixed and $z_s=\int_0^s V(u) f(u) d u$.
For any $h>0$, we have
$$
\begin{aligned}
\frac{1}{h}\left(V(h) z_s-C z_s\right) & =\frac{1}{h}\left[\int_0^s(C V(u+h)-CV(u)) f(u) d u\right] \\
& =\frac{1}{h}\left[\int_h^{s+h} CV(r) f(r-h) d r-\int_0^s CV(u) f(u) d u\right] \\
& =\frac{1}{h} \int_h^s CV(u)(f(u-h)-f(u)) d u  +\frac{1}{h} \int_s^{s+h} V(r) f(r-h) d r\\
&\quad-\frac{1}{h} \int_0^h CV(u) f(u) d u.
\end{aligned}
$$
Clearly, due to (a), one has
\begin{equation}
 \lim _{h \downarrow 0} \frac{1}{h} \int_0^h CV(u)f(u) d u=C^2 f(0).
\end{equation}
Next, we will prove that $$\lim _{h \downarrow 0} \frac{1}{h} \int_h^s CV(u)(f(u-h)-f(u)) d u=\int_0^s[-CV(u) f^{\prime}(u)]d u.$$ Since $f: [0, \infty) \rightarrow X$ is locally $L^0$-Lipschitz, it follows that
$$
\bigvee_{u \in[0, s]}\left\|f^{\prime}(u)\right\| \in L_{+}^0(\mathcal {F}) $$  and  $$\bigvee_{\substack{u \in[0, s] \\ h \in(0, u]}}\left\|\frac{f(u-h)-f(u)}{h}\right\| \in L_{+}^0(\mathcal {F}).
$$

Set $\eta_s=\bigvee\limits_{\substack{u \in[0, s] \\ h \in(0, u]}} \|CV(u) \frac{f(u-h)-f(u)}{h}+CV(u) f^{\prime}(u) \|$, then
$\eta_s \in L_{+}^0(\mathcal {F})$ since the $C$-semigroup $\{V(s):s \geq 0\}$ is locally a.s. bounded.
Take $A_{n, s}=\left[n-1 \leq \eta_s<n\right]$ for each $n \in N$, then $A_{n, s} \in \mathcal {F}$,
$A_{i, s} \cap A_{j, s}=\emptyset$ for any $i, j \in N$ and $i \neq j$, and further $\sum\limits_{n=1}^{\infty} A_{n, s}=\Omega$.
Clearly, $CV(u) \frac{f(u-h)-f(u)}{h}$ converges to $-C V(u)f^{\prime}(u)$ in the $(\varepsilon, \lambda)$-topology as $h \downarrow 0$ for any $u \in[0, s]$. Thus $\|I_{A_{n, s}}[C V(u) \frac{f(u-h)-f(u)}{h}+CV(u) f^{\prime}(u)]\|$ converges to $0$ in probability $P$ as $h \downarrow 0$ for any $u \in[0, s]$ and $n\in N$.
Since $$\left\|I_{A_{n, s}}\left[C V(u) \frac{f(u-h)-f(u)}{h}+CV(u) f^{\prime}(u)\right]\right\| \leq n$$
for any $u \in[0, s], ~h \in(0, u]$ and $n\in N$, due to
Lebesgue's dominated convergence theorem, we have
$$\int_{A_{n, s}} \left[C V(u) \frac{f(u-h)-f(u)}{h}+CV(u) f^{\prime}(u)\right]dP$$ converges to 0 in probability $P$ as $h{\downarrow 0}$.
Thus, for any $h \in(0, u]$, according to Proposition $2.4$,
$$
\begin{aligned}
& \int_{\Omega}\left[\int_0^s\|I_{A_{n, s}}[C V(u) \frac{f(u-h)-f(u)}{h}+CV(u) f^{\prime}(u)]\| d u\right] d P \\
&=   \int_0^s\left[\int_{A_{n, s}}\|C V(u) \frac{f(u-h)-f(u)}{h}+CV(u) f^{\prime}(u)\| d P\right] d u \\
&\rightarrow  0 \text { as $h$} \downarrow 0
\end{aligned}
$$
for each $n \in N$,
i.e., $\int_0^s\|I_{A_{n, s}}C V(u) \frac{f(u-h)-f(u)}{h}+I_{A_{n, s}}CV(u) f^{\prime}(u)\| d u$ converges to 0 in probability $P$ as $h \downarrow 0$.
Moreover, for any $h \in(0, u]$ and $n \in N$,
$$
\begin{aligned}
& \left\| \int_0^s I_{A_{n, s}}C V(u) \frac{f(u-h)-f(u)}{h}+I_{A_{n, s}}CV(u) f^{\prime}(u)du\right\| \\
&\leq  \int_0^s\left\|I_{A_{n, s}}C V(u) \frac{f(u-h)-f(u)}{h}+I_{A_{n, s}}CV(u) f^{\prime}(u)\right\| d u,
\end{aligned}
$$
which implies that $\|\int_0^s I_{A_{n, s}}C V(u) \frac{f(u-h)-f(u)}{h}+I_{A_{n, s}}CV(u) f^{\prime}(u)du\|$ also converges to 0 in probability $P$ as $h \downarrow 0$.

Since $\sum\limits_{n=1}^{\infty} P\left(A_{n, s}\right)=P\left(\sum\limits_{n=1}^{\infty} A_{n, s}\right)=P(\Omega)=1$, we have $$\left\|\int_0^s\left[C V(u) \frac{f(u-h)-f(u)}{h}+C V(u) f^{\prime}(u)\right] d u\right\|$$ converges to 0 in probability $P$ as $h \downarrow 0$, i.e.,
\begin{equation}
\lim _{h \downarrow 0} \frac{1}{h} \int_0^s CV(u)(f(u-h)-f(u)) d u=\int_0^s[-CV(u) f^{\prime}(u) ]d u.
\end{equation}
Thus
$$
\lim _{h \downarrow 0} \frac{1}{h} \int_h^s CV(u)(f(u-h)-f(u)) d u=\int_0^s[-CV(u) f^{\prime}(u)]d u
$$
since
$$
\lim _{h \downarrow 0} \frac{1}{h} \int_0^h CV(u)(f(u-h)-f(u)) d u=C^2f(0)-C^2f(0)=0.
$$
Similarly, one can prove that
\begin{equation}
\lim _{h \downarrow 0} \frac{1}{h} \int_s^{s+h} V(r) f(r-h) d r=CV(s) f(s).
\end{equation}
Consequently, $\lim\limits_{h \downarrow 0}\left(V(h) z_s-Cz_s\right)$ exists, combining (2), (3) and
(4), we have
$$
\lim _{h \downarrow 0}\left(V(h) z_s-C z_s\right)=\int_0^s-C V(u) f^{\prime}(u) d u+CV(s) f(s)-C^2 f(0).
$$
Thus $z_s \in D(A)$ and
$$
C A z_s=-\int_0^s CV(u) f^{\prime}(u) d u+CV(s) f(s)-C^2 f(0) \text {, }
$$
i.e.,
$$
A\left[\int_0^s V(u) f(u)du\right]=V(s) f(s)-C f(0)-\int_0^s V(u) f^{\prime}(u) d u \text {.}
$$

\end{proof}

\par
Based on Lemma 4.2 and Theorem 4.3, we can state Theorem 4.4 below, which generalizes the corresponding results of Son, Thang and Oanh's work in 2024.

\begin{theorem}\label{theorem3.7}
Let $\{V(s):~s\geq0\}$ be a locally a.s. bounded $C$-semigroup on $X$ with the generator $(A,D(A))$. Then

(1) $\lim\limits_{s \downarrow 0} \frac{1}{s} \int_0^s V(u) z d u=C z$ for any $z \in X$;

(2) for any $z \in X$ and $s \geq 0, \int_0^s V(u) z d u \in D(A)$ and
$$
A \int_0^s V(u) z d u=V(s) z-C z ;
$$

(3) $R(C) \subseteq \overline{D(A)}$;

(4) for any $z \in D(A)$ and $s \geq 0$, we have $V(s) z \in D(A)$ and
$$
\frac{d V(s) z}{d s}=V(s) A z=A V(s) z;
$$

(5) for any $z \in D(A)$ and $s \geq 0$, we have
$$
\int_0^s V(u) A z d u=V(s) z-C z;
$$

(6) $A$ is a closed module homomorphism and satisfies $C^{-1}AC=A$.

\end{theorem}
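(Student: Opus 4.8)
The plan is to treat the six assertions in a slightly reordered, non-circular sequence, the point being that the one-sided difference quotients of $V(\cdot)z$ cannot be passed to the limit by ``cancelling'' the non-invertible operator $C$; the device to get around this is to cancel $C$ only from an equality of \emph{integrals}, where injectivity of $C$ suffices.

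I would read off (1) and (2) directly from Lemma 4.2 applied to the constant function $f\equiv z$, which is trivially continuously differentiable and locally $L^0$-Lipschitz with $f'=0$ and $f(0)=z$: part (a) gives $\lim_{s\downarrow0}\frac1s\int_0^s V(u)z\,du=Cz$, i.e. (1), and part (b) gives $\int_0^s V(u)z\,du\in D(A)$ together with $A\int_0^s V(u)z\,du=V(s)z-Cz$, i.e. (2). For (3), fix $z\in X$; by (2) and the fact that $D(A)$ is a submodule, $\frac1s\int_0^s V(u)z\,du\in D(A)$ for every $s>0$, and by (1) these elements converge to $Cz$, so $Cz\in\overline{D(A)}$, whence $R(C)\subseteq\overline{D(A)}$.

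For (4) I would first use $V(s)C=CV(s)=V(s)V(0)$ and $V(h)V(s)z=CV(s+h)z=V(s)V(h)z$ to get, for $z\in D(A)$, the identity $A_h(V(s)z)=\frac{V(h)V(s)z-CV(s)z}{h}=V(s)\frac{V(h)z-Cz}{h}=V(s)A_hz$; since $A_hz\to CAz$ and $V(s)$ is continuous, this tends to $CV(s)Az\in R(C)$, so $V(s)z\in D(A)$ and $AV(s)z=V(s)Az$. The derivative is the crux. I would show that $g(s):=CV(s)z$ is continuously differentiable with $g'(s)=CV(s)Az$: the right quotient is $\frac{CV(s+h)z-CV(s)z}{h}=V(s)A_hz\to CV(s)Az$, while the left quotient equals $V(s-h)A_hz$, converging to the same limit after splitting off $V(s-h)(A_hz-CAz)$ (controlled by local a.s. boundedness) and invoking strong continuity. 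Applying the continuous homomorphism $C$, the map $C^2V(\cdot)z=CV(\cdot)(Cz)$ is then continuously differentiable with derivative $C^2V(\cdot)Az$ and is locally $L^0$-Lipschitz by Theorem 4.3, since $Cz\in C(D(A))$. Proposition 2.2 now gives $C^2\int_0^s V(u)Az\,du=C^2(V(s)z-Cz)$, and cancelling the injective $C^2$ yields $\int_0^s V(u)Az\,du=V(s)z-Cz$, which is precisely (5). Differentiating this via Proposition 2.3(2) (the integrand $u\mapsto V(u)Az$ is continuous with $\bigvee_{u\in[0,s]}\|V(u)Az\|\in L_+^0(\mathcal F)$) gives $\frac{dV(s)z}{ds}=V(s)Az=AV(s)z$, completing (4).

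Finally (6): that $A$ is a module homomorphism is routine, since each $A_h$ is a homomorphism, scalar multiplication and limits are continuous, and $C^{-1}$ is a homomorphism on $R(C)$. Closedness I would derive from (5): if $z_n\to z$ and $Az_n\to y$ with $z_n\in D(A)$, then passing to the limit in $V(s)z_n-Cz_n=\int_0^s V(u)Az_n\,du$ (the integral converges because $\|\int_0^s V(u)(Az_n-y)\,du\|\le s\,(\bigvee_{u\in[0,s]}\|V(u)\|)\,\|Az_n-y\|\to0$ in probability) gives $V(s)z-Cz=\int_0^s V(u)y\,du$, so $\frac1s(V(s)z-Cz)\to Cy$ by (1), whence $z\in D(A)$ and $Az=y$. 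For $C^{-1}AC=A$, the inclusion $A\subseteq C^{-1}AC$ comes from $A_h(Cz)=CA_hz\to C^2Az$, giving $Cz\in D(A)$ and $A(Cz)=CAz$; for the reverse inclusion, if $Cz\in D(A)$ with $A(Cz)=Cv\in R(C)$, then applying (5) to $Cz$ and cancelling the injective $C$ gives $V(s)z-Cz=\int_0^s V(u)v\,du$, so (1) yields $z\in D(A)$ and $Az=v=C^{-1}ACz$. The main obstacle throughout is the derivative in (4)/(5): because $C$ is not invertible one cannot differentiate $V(\cdot)z$ directly, and the resolution is to differentiate $CV(\cdot)z$, move up to $C^2V(\cdot)z$ where Theorem 4.3 supplies local $L^0$-Lipschitzness, and then cancel $C^2$ from an integral equality rather than from a limit.
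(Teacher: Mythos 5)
Your proposal is correct, and it uses the same key ingredients as the paper's proof: Lemma 4.2 applied to $f\equiv z$ for (1)--(3), the commutation identity $A_hV(s)z=V(s)A_hz$ to get $V(s)z\in D(A)$ and $AV(s)z=V(s)Az$, Theorem 4.3 plus Proposition 2.2 applied to $s\mapsto CV(s)(Cz)$ followed by cancellation of the injective $C^2$ from an equality of integrals to get (5), and the same limiting arguments for closedness and for $C^{-1}AC\subset A$ in (6). The one genuine divergence is how the derivative statement in (4) is obtained, and your route is the tighter one. The paper computes $\frac{d}{ds}[CV(s)z]=CV(s)Az$ (writing only the right-hand difference quotient) and then asserts that (4) follows ``since $C$ is a.s.\ bounded and injective''; that step cancels $C$ inside a limit, which is delicate because injectivity of an a.s.\ bounded operator $C$ does not make $C^{-1}$ continuous on $R(C)$. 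You instead establish (5) first --- cancelling $C^2$ only from an \emph{equality} --- and then differentiate the identity $V(s)z-Cz=\int_0^s V(u)Az\,du$ via Proposition 2.3(2), whose hypotheses you verify ($u\mapsto V(u)Az$ is continuous and $\bigvee_{u\in[0,s]}\|V(u)Az\|\le\bigvee_{u\in[0,s]}\|V(u)\|\cdot\|Az\|\in L^0_+(\mathcal F)$ by local a.s.\ boundedness); this yields $\frac{dV(s)z}{ds}=V(s)Az$ with no cancellation inside a limit, and there is no circularity since your proof of (5) only needs the membership/commutation part of (4) together with differentiability of $CV(\cdot)z$. Two further minor differences, both benign: you treat the left difference quotient of $CV(\cdot)z$ explicitly (splitting off $V(s-h)(A_hz-CAz)$ and using local a.s.\ boundedness), where the paper writes only the right one; and in the closedness argument of (6) you replace the paper's Riesz-subsequence extraction by a direct convergence-in-probability estimate, which works because $\bigvee_{u\in[0,s]}\|V(u)\|\cdot s$ is a fixed element of $L^0_+(\mathcal F)$.
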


\begin{proof}
For any $z \in X$, let $f(s) \equiv z$ for any $s \geq 0$, then it is obvious that $f$ is continuously differentiable and locally $L^{0}$-Lipschitz. Thus statements (1) and (2) hold according to Lemma 4.2. Now we will prove $R(C) \subseteq \overline{D(A)}$. In fact, since for any $z \in X,~ \lim\limits_{s \downarrow 0} \frac{1}{s} \int_0^s V(u) z d u=Cz$ according to $(1)$, it follows from (2) that (3) clearly holds.

For (4), let $z\in D(A)$ and $s\geq 0$ be fixed,
since $V(s) \in B(X)$, it follows that
$$
\begin{aligned}
& V(s)CAz\\
&=V(s) \lim_{u\downarrow 0}[\frac{1}{u}[V(u)z-Cz]\\
&=\lim_{u\downarrow 0}\frac{1}{u}[V(u)(V(s)z)-C(V(s)z)] \\
&=CAV(s)z,\\
\end{aligned}
$$
which shows that $V(s)z\in D(A)$ and $V(s)Az=AV(s)z$.

Besides, observing that $$\frac{1}{u}[C(V(u+s)z)-C(V(s)z)]=\frac{1}{u}[V(u)V(s)z-CV(s)z]$$
for any $u>0$,
it follows that $\frac{d[CV(u)z]}{du}|_{u=s}$ exists and equals $CV(s)Az$, i.e., $$
\frac{d [CV(s) z]}{d s}=CV(s) A z=CA V(s) z.
$$
Thus statement (4) holds since $C$ is a.s. bounded and injective.

For (5), due to (4), one can obtain
$$\frac{d [CV(s) (Cz)]}{d s}=CV(s) A (Cz)=CA V(s) (Cz)
$$
for any $z\in D(A)$ and $s\geq 0$. Further, according to Theorem 4.3, the mapping $s\mapsto CV(s) (Cz)$ is
locally $L^0$-Lipschitz. Thus, due to Proposition 2.2, we have$$
\int_0^s C^{2}V(u) A z d u=C^{2}V(s) z-C^{3} z
$$ for any $z \in D(A)$ and $s\geq 0$, which shows that statement (5) holds since $C$ is a.s. bounded and injective.

For (6), it is obvious that $A$ is a module homomorphism. Next, we will prove that $A$ is closed. Suppose that there is a sequence $\left\{z_n,n\in N\right\} \subseteq D(A)$ such that $z_n \rightarrow z$ and $A z_n \rightarrow y$ as $n \rightarrow \infty$. Hence the sequence $\left\{\left\|A z_n-y\right\|, n \in {N}\right\}$ converges to 0 in probability $P$ as $n \rightarrow \infty$. Thus, according to Riesz theorem, there is a subsequence $\left\{\left\|Az_{n_k}-y\right\|, k \in N\right\}$ such that $\left\|Az_{n_k}-y\right\| \stackrel{a.s.}{\longrightarrow} 0$ as $k \rightarrow \infty$.
Since  $\{V(s): s \geq 0\}$ is locally a.s. bounded and
$$
\begin{aligned}
& \left\|\int_0^s V(u)A z_{n_k} d u-\int_0^s V(u) y d u\right\| \\
& \leq \int_0^s\left\|V(u)A z_{n_k}-V(u) y\right\| d u \\
&\leq \bigvee_{u \in[0, s]}\|V(u)\| \int_0^s\left\|Az_{n_k}-y\right\| d u \\
&= \bigvee_{u \in[0, s]}\|V(u)\| \left\|Az_{n_k}-y\right\|\cdot s\\
\end{aligned}
$$
for any $s\geq 0$,
it follows that $\left\|\int_0^s V(u)A z_{n_k} d u-\int_0^s V(u) y d u\right\| \stackrel{a.s.}{\longrightarrow} 0$ as $k \rightarrow \infty$.
Further, according to (5), we have
$$
\begin{aligned}
V(s) z-C z & =\lim _{k \rightarrow \infty}\left(V(s) z_{n_k}-C z_{n_k}\right) \\
& =\lim _{k \rightarrow \infty} \int_0^s V(u) A z_{n_k} d u \\
& =\int_0^s V(u) y d u,
\end{aligned}
$$
which shows that $\lim\limits_{s \downarrow 0} \frac{1}{s}(V(s) z-Cz)$ exists and equals $C y$.
Thus $z \in D(A)$ and $A z=y$, i.e., $A$ is a closed module homomorphism.

Due to (4), we have $A \subset C^{-1} A C$. Conversely, let $x \in D\left(C^{-1} A C\right)$, that is, $C x \in D(A)$ and $A C x \in R(C)$. By (5), we obtain that
$$
\begin{aligned}
C(V(t)x-C x)&=V(t) C x-C^2 x\\
&=\int_0^t V(s) A C x d s\\
&=C \int_0^t V(s) C^{-1} A C x d s
\end{aligned}
$$
for any  $t\geq 0$ and $x \in D\left(C^{-1} A C\right)$.
Consequently, for any  $t> 0$ and $x \in D\left(C^{-1} A C\right)$, one has
$$
\frac{V(t) x-C x}{t}=\frac{1}{t} \int_0^t V(s) C^{-1} A C x d s.
$$
Letting $t \rightarrow 0^{+}$ in the above equality, we have $A x=C^{-1} A C x$, which implies that $C^{-1} A C \subset A $.
\end{proof}

\section{Relations among $C$--existence families, $C$--semigroups and their associated abstract Cauchy problems in complete $RN$ modules}\label{sec5}

This section is devoted to giving some relations among $C$-existence families, $C$-semigroups and their associated abstract Cauchy problems in complete $RN$ modules. Clearly, it follows from Theorem 4.4 that a locally a.s. bounded $C$-semigroup $\{V(s): s \geq 0\}$ with its generator $(A, D(A))$ becomes a strong $C$-existence family for $(A, D(A))$. Thus Theorem 5.1 below guarantees the existence of solutions and mild solutions of the abstract Cauchy problem $(1)$. However, for a $C$-semigroup,
Theorem $5.2$ below shows that we can also guarantee the uniqueness of solutions and mild solutions of (1) even if such a $C$-semigroup is generated by an extension of $(A, D(A))$.

\begin{theorem}\label{theorem4.4}
(a) Let $\{V(s): s \geq 0\}$ be a mild $C$-existence family for a module homomorphism $A$. Then, for any $z \in R(C)$, there is a mild solution of the abstract Cauchy problem (1).

Further, for any $z_n \in X$ and $z_n \rightarrow 0(n \rightarrow \infty), W\left(s, C z_n\right) \equiv V(s) z_n$, acted as a sequence of mild solutions of (1), converges to 0 uniformly on any compact subsets of $[0, \infty)$ as $z_n \rightarrow 0$.

(b) Let $\{V(s): s \geq 0\}$ be a $C$-existence family for a module homomorphism $A$. Then, for any $z \in C(D(A))$, there is a solution of the abstract Cauchy problem (1).

Further, both $W\left(s, C z_n\right) \equiv V(s) z_n \rightarrow 0$ and $A W\left(s, C z_n\right) \rightarrow 0$ uniformly on any compact subsets of $[0, \infty)$, whenever $A z_n \rightarrow 0$ and $z_n \rightarrow 0$.
\end{theorem}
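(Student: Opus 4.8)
The plan is to exhibit the solutions explicitly via the prescription already signalled in the statement, namely $W(s,Cy)\equiv V(s)y$ with $y$ ranging over $X$ in part (a) and over $D(A)$ in part (b), and then to verify the two defining clauses of a (mild) solution directly against Definitions 3.1 and 3.2. A preliminary observation I would record first is that every (mild) $C$-existence family automatically satisfies $V(0)=C$: taking $s=0$ in clause (2) of Definition 3.1 (resp.\ Definition 3.2) gives $A\cdot 0=0=V(0)z-Cz$, so $V(0)z=Cz$ for every admissible $z$. This is precisely what is needed to match the initial datum, so the initial conditions in both parts will come for free.

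For part (a), fix $z=Cy\in R(C)$ and set $W(s,z)=V(s)y$. Continuity of $s\mapsto V(s)y$ is clause (1) of Definition 3.1, while local a.s.\ boundedness together with $\|V(s)y\|\leq\|V(s)\|\,\|y\|$ gives $\bigvee_{u\in[0,s]}\|W(u,z)\|\leq(\bigvee_{u\in[0,s]}\|V(u)\|)\|y\|\in L^0_+(\mathcal F)$. Clause (2) then reads $\int_0^s V(u)y\,du\in D(A)$ and $A\int_0^s V(u)y\,du=V(s)y-Cy=W(s,z)-z$, which is exactly the mild-solution identity $W(s,z)=A\int_0^s W(u,z)\,du+z$, with $W(0,z)=V(0)y=Cy=z$. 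For the convergence claim I would fix $l>0$ (any compact subset of $[0,\infty)$ lies in some $[0,l]$) and estimate $\bigvee_{s\in[0,l]}\|V(s)z_n\|\leq(\bigvee_{s\in[0,l]}\|V(s)\|)\,\|z_n\|$; since $z_n\to 0$ forces $\|z_n\|\to 0$ in probability and $\bigvee_{s\in[0,l]}\|V(s)\|$ is a fixed element of $L^0_+(\mathcal F)$, multiplication by this fixed element preserves convergence in probability, yielding uniform convergence to $0$ on $[0,l]$.

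For part (b), fix $z=Cy$ with $y\in D(A)$ and again set $W(s,z)=V(s)y$. Here $V(s)\in B([D(A)])$ gives $V(s)y\in D(A)$, and clause (1) of Definition 3.2 makes $s\mapsto V(s)y$ continuous into $[D(A)]$, so both $s\mapsto V(s)y$ and $s\mapsto AV(s)y$ are continuous into $X$, and local a.s.\ boundedness yields $\bigvee_{u\in[0,l]}\|AV(u)y\|\leq\bigvee_{u\in[0,l]}\|V(u)y\|_{[D(A)]}\in L^0_+(\mathcal F)$. The key step is to rewrite clause (2) as $V(s)y-Cy=\int_0^s AV(u)y\,du$ and apply Proposition 2.3(2) to $G(s)=\int_0^s AV(u)y\,du$, obtaining that $G$ is differentiable with $G'(s)=AV(s)y$; hence $s\mapsto V(s)y$ lies in $C^1([0,\infty),X)$ with $\frac{d}{ds}V(s)y=AV(s)y=AW(s,z)$, so $W$ solves (1), while $W\in C([0,\infty),[D(A)])$ and $W(0,z)=V(0)y=Cy=z$ complete the verification. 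The convergence statement is handled as in (a) but in the graph norm, via $\bigvee_{s\in[0,l]}(\|V(s)z_n\|+\|AV(s)z_n\|)\leq(\bigvee_{s\in[0,l]}\|V(s)\|)\,\|z_n\|_{[D(A)]}$, where $z_n\to 0$ and $Az_n\to 0$ force $\|z_n\|_{[D(A)]}\to 0$ in probability, so both $V(s)z_n\to 0$ and $AV(s)z_n\to 0$ uniformly on $[0,l]$.

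The purely algebraic matching with the definitions is routine; the step that genuinely invokes the random-module machinery is the differentiability argument in part (b). In a Banach space one would integrate a continuous function without comment, but here a continuous $X$-valued function need not be Riemann integrable, so I must first secure $\bigvee_{u\in[0,l]}\|AV(u)y\|\in L^0_+(\mathcal F)$ from the local a.s.\ boundedness of $\{V(s)|_{[D(A)]}\}$ before invoking Proposition 2.3(2). I expect this interplay, continuity into the graph module together with a.s.\ boundedness of the relevant supremum, to be the main technical obstacle, whereas the uniform-convergence claims reduce in both parts to the continuity of multiplication by a fixed element of $L^0_+(\mathcal F)$ in the $(\varepsilon,\lambda)$-topology.
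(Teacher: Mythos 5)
Your proposal is correct and follows essentially the same route as the paper: take $W(s,Cy)\equiv V(s)y$, verify the (mild) solution clauses directly from Definitions 3.1/3.2, and deduce the uniform-convergence (well-posedness) claims from the local a.s.\ boundedness of the family in $B(X)$ resp.\ $B([D(A)])$. The paper's own proof is just a terser version of this; your added details (deriving $V(0)=C$ by taking $s=0$ in clause (2), and invoking Proposition 2.3(2) to get $C^1$-regularity in part (b)) are exactly the steps the paper leaves implicit, and they are carried out correctly.
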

\begin{proof}
(a) Since $\{V(s): s \geq 0\}$ is a mild $C$-existence family for $A$, we have that for any $z \in R(C)$, there is a $y \in X$ such that $z=Cy$, and further $W(s, Cy) \equiv V(s)y$ is a mild solution of $(1)$. Besides, since the family $\{V(s): s \geq 0\}$ is locally a.s. bounded, i.e., $\bigvee\limits_{s \in[0, l]}\|V(s)\| \in L_{+}^0(\mathcal {F})$ for any $l>0$, the analogue of well-posedness in (a) follows.

(b) Since $\{V(s): s \geq 0\}$ is a $C$-existence family for $A$, we have that for any $z \in C(D(A))$, there is a $y \in D(A)$ such that $z=Cy$, and further $W(s, Cy) \equiv V(s)y$ is a solution of $(1)$. Since $\{V(s):s\geq 0\}\subseteq B([D(A)])$ is locally a.s. bounded, i.e., $\bigvee\limits_{s \in[0, l]}\|V(s)\|_{[D(A)]} \in L_{+}^0(\mathcal {F})$ for any $l>0$, which implies that the analogue of well-posedness in (b) follows.
\end{proof}

\begin{theorem}\label{theorem5.1}
(a) Let $\{V(s): s \geq 0\}$ be a locally a.s. bounded $C$-semigroup on $X$ with its generator $(A, D(A))$. Then $\{V(s): s \geq 0\}$ is a strong $C$-existence family for $(A, D(A))$.

(b) If a locally a.s. bounded $C$-semigroup is generated by an extension of $(A, D(A))$, then all solutions and mild solutions satisfying the locally $L^0$-Lipschitz condition of the abstract Cauchy problem (1) are unique.
\end{theorem}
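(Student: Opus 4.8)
The plan is to obtain part (a) as an immediate consequence of Theorem 4.4 and to prove the uniqueness in part (b) by showing that every admissible solution $W$ must coincide with $C^{-1}V(s)z$, via a constancy argument for the auxiliary function $u\mapsto V(s-u)W(u)$.

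For part (a), I would verify the two layers of the strong $C$-existence family definition. The family is a mild $C$-existence family (Definition 3.1) because it is locally a.s. bounded by hypothesis, condition (1) is the strong continuity built into the $C$-semigroup (Definition 4.1(1)), and condition (2) is exactly Theorem 4.4(2). To obtain the $C$-existence family for the restriction $\{V(s)|_{[D(A)]}\}$ (Definition 3.2), I would invoke Theorem 4.4(4): for $z\in D(A)$ it gives $V(s)z\in D(A)$ and $AV(s)z=V(s)Az$, so that $\|V(s)z\|_{[D(A)]}=\|V(s)z\|+\|V(s)Az\|\le\|V(s)\|\,\|z\|_{[D(A)]}$, which shows $\{V(s)|_{[D(A)]}\}\subseteq B([D(A)])$ is locally a.s. bounded and, together with strong continuity applied to both $z$ and $Az$, continuous into $[D(A)]$; finally Theorem 4.4(5) combined with $AV(u)z=V(u)Az$ yields $\int_0^s AV(u)z\,du=V(s)z-Cz$, which is condition (2) of Definition 3.2.

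For part (b), write $B$ for the generator of the given $C$-semigroup, so $A\subseteq B$, and let $W$ be a solution of (1) satisfying the locally $L^0$-Lipschitz condition; then $W(u)\in D(A)\subseteq D(B)$ and $W'(u)=AW(u)=BW(u)$. Fixing $s>0$, I would set $\phi(u)=V(s-u)W(u)$ on $[0,s]$ and compute $\phi'$ from the splitting
$$\frac{\phi(u+h)-\phi(u)}{h}=V(s-u-h)\,\frac{W(u+h)-W(u)}{h}+\frac{V(s-u-h)-V(s-u)}{h}\,W(u).$$
Using the local a.s. bound $\|V(t)\|\le M_s:=\bigvee_{t\in[0,s]}\|V(t)\|$ and strong continuity, the first term tends to $V(s-u)BW(u)$, while Theorem 4.4(4) applied to the semigroup's own generator $B$ makes the second term tend to $-V(s-u)BW(u)$; hence $\phi'\equiv 0$.

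The decisive point is that $\phi$ is $L^0$-Lipschitz, which is needed before Proposition 2.2 can turn $\phi'\equiv 0$ into $\phi\equiv\text{const}$. From
$$\phi(u_1)-\phi(u_2)=V(s-u_1)\big[W(u_1)-W(u_2)\big]+\big[V(s-u_1)-V(s-u_2)\big]W(u_2),$$
the first summand is controlled by $M_s\xi_s|u_1-u_2|$, where $\xi_s$ is the Lipschitz constant of $W$ on $[0,s]$; for the second I would use Theorem 4.4(5) for $B$ in the form $V(t)x-V(t')x=\int_{t'}^t V(r)Bx\,dr$ with $x=W(u_2)\in D(B)$ and note $\|BW(u_2)\|=\|W'(u_2)\|\le\xi_s$, so that Proposition 2.3(1) bounds it by $M_s\xi_s|u_1-u_2|$ as well. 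Thus $\phi$ is $L^0$-Lipschitz, Proposition 2.2 gives $\phi(s)-\phi(0)=\int_0^s\phi'(u)\,du=0$, i.e. $CW(s)=V(s)z$, and injectivity of $C$ forces $W(s)=C^{-1}V(s)z$, proving uniqueness of solutions. For mild solutions I would reduce to this case: if $W_1,W_2$ are two admissible mild solutions with the same initial datum, then $\tilde v(s)=\int_0^s\big(W_1-W_2\big)(u)\,du$ lies in $C([0,\infty),[D(A)])\cap C^1([0,\infty),X)$, satisfies $\tilde v(0)=0$ and $\tilde v'(s)=(W_1-W_2)(s)=A\tilde v(s)$, and is locally $L^0$-Lipschitz because its derivative $W_1-W_2$ is locally a.s. bounded; the solution uniqueness just proved forces $\tilde v\equiv 0$, hence $W_1=W_2$.

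The main obstacle I anticipate lies in the last two paragraphs: in a complete $RN$ module one cannot rely on boundedness of continuous functions, so every limit and Lipschitz estimate for $\phi$ must be funneled through the local a.s. boundedness constant $M_s$ and the integral identities of Theorem 4.4, and the step from $\phi'\equiv 0$ to $\phi$ constant is precisely where the locally $L^0$-Lipschitz hypothesis becomes indispensable through Proposition 2.2.
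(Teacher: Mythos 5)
Your proposal is correct, and it reaches the conclusion by the same overall strategy as the paper: for (a), you simply unpack what the paper dismisses with ``it is clear from Theorem 4.4'' (parts (2), (4), (5) give exactly the mild and restricted $C$-existence family conditions); for (b), you show that an auxiliary function of the form $u \mapsto V(s-u)(\text{solution at }u)$ has vanishing derivative, verify it is $L^0$-Lipschitz, and invoke Proposition 2.2 plus injectivity of $C$. The genuinely different point is the implementation of the $L^0$-Lipschitz step, which is the crux in the random setting. The paper works with $f(u)=C^{2}V(s-u)v(u)$, where $v$ is a locally $L^0$-Lipschitz function with $v'=Av$ and $v(0)=0$ (covering differences of solutions and of integrated mild solutions at once); the two factors of $C$ are inserted precisely so that, after the semigroup identity $CV(s+t)=V(s)V(t)$, the troublesome term $[V(s-u_1)-V(s-u_2)]v(u_2)$ takes the form $V(\cdot)\,CA_{u}\bigl(Cv(u_2)\bigr)$, which Lemma 4.1 can bound; the price is the weaker identity $C^{3}v(s)=0$, still sufficient by injectivity. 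You instead take $\phi(u)=V(s-u)W(u)$ with no $C$ factors and control $[V(s-u_1)-V(s-u_2)]W(u_2)$ by the integral identity of Theorem 4.4(5) for the semigroup's own generator $B$, namely $V(t)x-V(t')x=\int_{t'}^{t}V(r)Bx\,dr$, together with $\|BW(u_2)\|=\|W'(u_2)\|\le \xi_s$ and Proposition 2.3(1). This bypasses Lemma 4.1 at the level of this proof (it still enters implicitly, since Theorem 4.4(5) rests on Theorem 4.3, which rests on Lemma 4.1), avoids the $C^{2}$ bookkeeping, and yields the stronger conclusion $CW(s)=V(s)z$, i.e.\ the representation $W(s)=C^{-1}V(s)z$, which makes the link with Corollaries 5.3 and 5.4 immediate; your explicit reduction of mild-solution uniqueness to solution uniqueness via $\tilde v(s)=\int_0^s (W_1-W_2)(u)\,du$ (locally $L^0$-Lipschitz because the mild-solution definition builds in local a.s. boundedness) is also spelled out more carefully than in the paper, which leaves that reduction implicit in its choice of $v$.
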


\begin{proof}
(a) It is clear from Theorem 4.4.

(b) Suppose that $\tilde{A}$ is the generator of the $C$-semigroup $\{V(s): s \geq 0\}$ and $A \subset \tilde{A}$. Suppose that there is a locally $L^0$-Lipschitz function $v:[0, \infty) \rightarrow X$ such that $v^{\prime}(s)=A(v(s))$ for any $s \geq 0$ and $v(0)=0$. Let $s \geq 0$ be fixed, define a function $f: [0, s] \rightarrow X$ by $f(u)=C^{2}V(s-u) v(u)$ for any $0 \leq u \leq s$, then
$$
\begin{aligned}
f^{\prime}(u) & =C^{2}\cdot\frac{d V(s-u) v(u)}{d u} \\
& =C^{2}\cdot[V(s-u) Av(u)-\tilde{A} V(s-u) v(u)] \\
& =C^{2}\cdot[V(s-u) Av(u)-V(s-u) \tilde{A} v(u)]\\
&=0.
\end{aligned}
$$
Further, according to Lemma 4.1, we have
\begin{align}\nonumber
& \bigvee\left\{\left\|\frac{f\left(u_1\right)-f\left(u_2\right)}{u_1-u_2}\right\| ~|~ u_1, u_2 \in[0, s] \text { and } u_1 \neq u_2\right\} \\\nonumber
& =\bigvee\left\{\left\| \frac{C^{2}V\left(s-u_1\right) v\left(u_1\right)-C^{2}V\left(s-u_2\right) v\left(u_2\right)}{u_1-u_2} \right\| ~|~ u_1, u_2 \in[0, s] \text { and } u_1 \neq u_2\right \} \\\nonumber
& =\bigvee \{\| \frac{C^{2}V(s-u_1) v(u_1)-C^{2}V(s-u_1) v(u_2)}{u_1-u_2}\\\nonumber
&~~~~~~~+\frac{C^{2}V(s-u_1) v(u_2)-C^{2}V(s-u_2) v(u_2)}{{u_1-u_2}} \left\|\right.|~u_1, u_2 \in[0, s] \text { and } u_1 \neq u_2\} \\\nonumber
& \leq \bigvee_{u \in[0, s]}\|V(u)\|\cdot \|C^{2}\| \cdot \bigvee\left\{\left\|\frac{v\left(u_1\right)-v\left(u_2\right)}{u_1-u_2}\right\| ~|~ u_1, u_2 \in[0, s] \text { and } u_1 \neq u_2\right\} \\\nonumber
& ~~~~~~~+\bigvee\left\{\| \frac{CV(s-u_2)[ V(u_2-u_1)v(u_2)-Cv(u_2)]}{u_2-u_1}\|~|~  u_1, u_2 \in[0, s] \text { and } u_1 \neq u_2\right \} \\\nonumber
& \leq \bigvee_{u \in[0, s]}\|V(u)\|\cdot \|C^{2}\| \cdot\bigvee\left\{\left\|\frac{v\left(u_1\right)-v\left(u_2\right)}{u_1-u_2}\right\| ~|~ u_1, u_2 \in[0, s] \text { and } u_1 \neq u_2\right\} \\\nonumber
&~~~~~~~+\bigvee_{u \in[0, s]}\|V(u)\| \cdot \bigvee\left\{\left\|\frac{V(u) (Cv(u_2))-V(0)(Cv(u_2))}{u}\right\| ~|~ u\in [0,s] \text { and } u_2 \in[0, s]\right\} \\\nonumber
&\in L_{+}^0(\mathcal {F})\nonumber
\end{align}
since the function $v$ is locally $L^{0}$-Lipschitz. Thus, according to Proposition 2.2, we have that
$$
\int_0^s f^{\prime}(u) d u=f(s)-f(0) \text {}
$$
for any $s \geq 0$,
i.e., $C^{3} v(s)=C^2V(s) v(0)=0$, thus $v(s) \equiv 0$ since $C$ is injective, which shows that all solutions and mild solutions satisfying the locally $L^0$-Lipschitz condition of the abstract Cauchy problem $(1)$ are unique.
\end{proof}


\begin{remark}\label{remark5.10}
If we choose ${\mathcal F}=\{\Omega,\emptyset\}$, then a complete $RN$ module $X$ reduces to a Banach space $X$ and the locally a.s. bounded $C$-semigroup $\{V(s):s\geq0\}$ reduces to an ordinary $C$-semigroup on $X$. Thus the main results with respect to locally a.s. bounded families of operators in this paper also generalize the corresponding results in Banach spaces \cite[Chapter 2 and Chapter 3]{deL1} since a continuous function from $[0, \infty)$ to a Banach space $X$ is automatically locally bounded.
\end{remark}
\par
Based on Theorems 5.1 and 5.2, one can immediately obtain Corollaries 5.3 and 5.4 below.

\begin{corollary}[\cite{ZhangCauchy}]\label{corollary5.11}
Suppose that $\{V(s):s\geq0\}$ is a locally a.s. bounded $C_{0}$-semigroup on $X$ with the generator $(A,D(A))$. Then, under the locally $L^{0}$-Lipschitz condition on the solution, the abstract Cauchy problem (1)
has a unique  solution $u(s):=V(s)z$ belonging to $C([0, \infty),[D(A)]) \cap C^{1}([0, \infty), X)$ for any $z\in D(A)$.
\end{corollary}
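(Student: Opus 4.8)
The plan is to read the statement as the specialization of the general theory of Section 5 to the case $C=I$, since a $C_0$-semigroup is exactly a $C$-semigroup (Definition 4.1) whose operator $C$ is the identity. Under this choice one has $R(C)=X$, $C(D(A))=D(A)$ and $C^{-1}=I$, so the generator of Definition 4.2 reduces to the usual $C_0$-semigroup generator and every hypothesis phrased in terms of $C$ becomes automatic. Consequently the whole argument will be a transcription of Theorems 4.3, 4.4, 5.1 and 5.2 with $C=I$.

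For existence, I would first invoke Theorem 5.2(a): the locally a.s. bounded $C_0$-semigroup $\{V(s):s\ge 0\}$ is a strong $C$-existence family for $(A,D(A))$, so in particular its restriction $\{V(s)|_{[D(A)]}:s\ge 0\}$ is a $C$-existence family for $A$. Applying Theorem 5.1(b) to this restriction, for every $z\in C(D(A))=D(A)$ the mapping $u(s):=V(s)z$ is a solution of (1); by the definition of a solution this already guarantees $u\in C([0,\infty),[D(A)])\cap C^1([0,\infty),X)$. The initial condition holds because $V(0)z=Cz=z$, and as a cross-check statement (4) of Theorem 4.4 gives $\frac{d}{ds}V(s)z=V(s)Az=AV(s)z$, whose continuity in $s$ (by strong continuity applied to $Az\in X$) reconfirms both the $C^1$ regularity into $X$ and the continuity into $[D(A)]$.

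It remains to match this solution with the uniqueness clause. Here I would apply Theorem 4.3 with $C=I$: the map $s\mapsto CV(s)z=V(s)z$ is locally $L^0$-Lipschitz for every $z\in C(D(A))=D(A)$, so $u(s)=V(s)z$ indeed satisfies the locally $L^0$-Lipschitz condition. Finally, since the $C_0$-semigroup is generated by $(A,D(A))$ itself, which trivially counts as an extension of $(A,D(A))$, Theorem 5.2(b) shows that any two solutions of (1) obeying the locally $L^0$-Lipschitz condition coincide. Combining the three steps yields that $u(s)=V(s)z$ is the unique locally $L^0$-Lipschitz solution in $C([0,\infty),[D(A)])\cap C^1([0,\infty),X)$ for each $z\in D(A)$.

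Since all the analytic work has already been done in Sections 4 and 5, no step is a genuine obstacle; the single point worth flagging is the need to check, via Theorem 4.3, that the candidate solution $V(s)z$ itself lies in the locally $L^0$-Lipschitz class. Without this verification the uniqueness assertion of Theorem 5.2(b) would not actually single out $V(s)z$ among the admissible solutions, so this is the place where the argument must be completed rather than merely quoted.
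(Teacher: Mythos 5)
Your proposal is correct and takes essentially the same route as the paper: the paper obtains this corollary immediately by specializing Theorems 5.1 and 5.2 (existence via the strong $C$-existence family, uniqueness via the extension clause) to the case $C=I$, exactly as you do. Your additional verification via Theorem 4.3 that the candidate solution $V(s)z$ itself satisfies the locally $L^{0}$-Lipschitz condition is a detail the paper leaves implicit, and you handle it correctly.
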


\begin{corollary}[\cite{Son2023}]\label{corollary5.11}
Suppose that $\{V(s):s\geq0\}$ is an exponentially bounded $C$-semigroup on $X$ with the generator $(A,D(A))$. Then, under the locally $L^{0}$-Lipschitz condition on the solution, the abstract Cauchy problem (1) has a unique solution
$u(s):=V(s) C^{-1}z$ belonging to $C([0, \infty),[D(A)]) \cap C^{1}([0, \infty), X)$ for any $z\in C(D(A))$.
\end{corollary}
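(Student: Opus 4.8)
The plan is to reduce the exponentially bounded situation to the locally a.s.\ bounded theory already developed in Theorems 5.1 and 5.2, since an exponentially bounded $C$-semigroup is automatically locally a.s.\ bounded. Thus the whole argument breaks into a reduction step, an existence assertion, and a uniqueness assertion, which is precisely why the statement can be obtained ``immediately'' from the preceding results.

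First I would perform the reduction. By hypothesis there exist $\tau \in L^0(\mathcal{F},R)$ and $W \in L_+^0(\mathcal{F})$ with $\|V(s)\| \leq W e^{\tau s}$ for all $s \geq 0$. Fixing $l>0$, every $s \in [0,l]$ satisfies $e^{\tau s} \leq e^{|\tau| l}$, and $e^{|\tau| l} \in L_+^0(\mathcal{F})$ because $\tau \in L^0(\mathcal{F},R)$. Hence
$$
\bigvee_{s \in [0,l]} \|V(s)\| \leq W e^{|\tau| l} \in L_+^0(\mathcal{F}),
$$
so $\{V(s):s\geq 0\}$ is locally a.s.\ bounded and both Theorem 5.1 and Theorem 5.2 apply.

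For existence I would invoke Theorem 5.2(a): the locally a.s.\ bounded $C$-semigroup with generator $(A,D(A))$ is a strong $C$-existence family for $(A,D(A))$, hence in particular a $C$-existence family. Given $z \in C(D(A))$, the injectivity of $C$ lets me write $z = Cy$ with $y = C^{-1}z \in D(A)$, and Theorem 5.1(b) then supplies the solution $u(s) = V(s)y = V(s)C^{-1}z$, whose initial value is $u(0) = V(0)C^{-1}z = C C^{-1}z = z$ since $V(0)=C$. The regularity $u \in C([0,\infty),[D(A)]) \cap C^1([0,\infty),X)$ is built into the notion of a solution and ultimately rests on Theorem 4.4(4), which yields $V(s)y \in D(A)$ together with $\frac{d}{ds}V(s)y = V(s)Ay = AV(s)y$; strong continuity of the semigroup then makes both $s \mapsto V(s)y$ and $s \mapsto AV(s)y = V(s)Ay$ continuous into $X$, giving continuity of $u$ in the graph $L^0$-norm.

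Finally, for uniqueness I would apply Theorem 5.2(b) with the generating operator chosen to be $(A,D(A))$ itself, which is trivially an extension of $(A,D(A))$; this forces every solution of (1) satisfying the locally $L^0$-Lipschitz condition to agree with $u$. Honestly I expect no genuine obstacle here: the only point deserving a little care is confirming the graph-norm regularity through Theorem 4.4(4), while everything else is a direct transcription of the locally a.s.\ bounded results.
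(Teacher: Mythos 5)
Your proposal is correct and matches the paper's (implicit) argument exactly: the paper derives this corollary ``immediately'' from Theorems 5.1 and 5.2 after observing, as you do, that exponential boundedness implies local a.s.\ boundedness, with existence coming from Theorem 5.2(a) combined with Theorem 5.1(b) and uniqueness from Theorem 5.2(b) applied to the trivial extension $A\subseteq A$. The only detail you might add for completeness is that $u(s)=V(s)C^{-1}z$ is itself locally $L^{0}$-Lipschitz (so that the difference with any competing Lipschitz solution is again Lipschitz), which follows at once from $\|u(s_1)-u(s_2)\|\leq |s_1-s_2|\cdot\bigvee_{r\in[0,l]}\|V(r)\|\cdot\|AC^{-1}z\|$.
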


\section{An application to a type of stochastic differential equations}
\begin{example}\label{example5.12}

Let $(\Omega, \mathcal{F}, P)$ be a given  probability space. Let $\mathcal{B}^2(R^+)$ denote the space of all real-valued measurable processes $Y=\{Y_t\}_{t\geq 0}$ on $(\Omega, \mathcal{F}, P)$ satisfying
$$
P\left\{\omega \in\Omega \mid\int_0^\infty Y^2_t(\omega) d t<\infty \right\}=1,
$$
define a mapping $\|\cdot\|$: $\mathcal{B}^2(R^+)\rightarrow L_{+}^0(\mathcal{F})$ by
$$
\|Y\|=\|\{Y_t\}_{t\geq 0}\|=\left(\int_0^{\infty}Y^2_td t\right)^{\frac{1}{2}}
$$
for any $Y \in \mathcal{B}^2(R^+)$.
Here, we identify $Y$ with $\bar{Y}$ in $\mathcal{B}^2(R^+)$ if
$$
P\left\{\omega \in\Omega \mid\int_0^\infty (Y_t(\omega)-\bar{Y}_t(\omega))^2 d t=0 \right\}=1.
$$
Clearly, $\|\cdot\|$ defines an $L^0$-norm on $\mathcal{B}^2(R^+)$ and $(\mathcal{B}^2(R^+),\|\cdot\|)$ becomes a complete $RN$ module.

Let $H$ be an operator defined by
$$
H(Y)=\{I_{[0,1]}(t)tZY_t\}_{t\geq 0}
$$
for any $Y \in \mathcal{B}^2(R^+)$, where
$$
I_{[0, 1]}(t)= \begin{cases}1, & \text { if } t \in[0, 1], \\ 0, & \text { otherwise},\end{cases}
$$
and $Z$ is a standard Gaussian random variable.
Let $C=e^{H}$, then we have
$$
\begin{aligned}
C(Y)&=\{1+(H)+\frac{(H)^2}{2!}+\cdot\cdot\cdot)Y_t\}_{t\geq 0}\\
&=\{Y_t+(H)(Y_t)+\frac{(H)^2}{2!}(Y_t)+\cdot\cdot\cdot\}_{t\geq 0}\\
&=\{Y_t+(I_{[0,1]}(t)tZ)Y_t+\frac{(I_{[0,1]}(t)tZ)^2}{2!}Y_t+\cdot\cdot\cdot\}_{t\geq 0}\\
&=\{e^{I_{[0,1]}(t)tZ}Y_t\}_{t\geq 0}
\end{aligned}
$$
for any $Y \in \mathcal{B}^2(R^+)$.
Similarly, one has
$$
e^{ZHs}( Y)=\{e^{Z^2I_{[0,1]}(t)ts}Y_t\}_{t\geq 0}
$$ for any $Y \in \mathcal{B}^2(R^+)$
and $s \geq 0$. Let
$$
V(s)=e^{ZHs} e^{H}
$$
for any $s \geq 0$, then $\{V(s): s \geq 0\}$ is a $C$-semigroup on  $\mathcal{B}^2(R^+)$. It is easy to check that for any $s \geq 0$,
$$
\begin{aligned}
\|V(s)(Y)\|&=\|\{e^{Z^2I_{[0,1]}(t)ts+ZI_{[0,1]}(t)t}Y_t\}_{t\geq 0}\|\\
&=\left(\int_0^{\infty}(e^{Z^2I_{[0,1]}(t)ts+ZI_{[0,1]}(t)t}Y_t)^2d t\right)^{\frac{1}{2}}\\
&\leq e^{Z}e^{Z^2s}\left(\int_0^{\infty}Y^2_td t\right)^{\frac{1}{2}}\\
&= e^{Z}e^{Z^2s}\|Y\|
\end{aligned}
$$
for any $Y \in \mathcal{B}^2(R^+)$. Thus, we have

$$\|V(s)\|\leq e^{Z}e^{Z^2s}$$
for any $s \geq 0$.
Consequently, $\{V({s}): s \geq 0\}$ is exponentially bounded. Further, one can obtain that
$A=ZH$ is the infinitesimal generator of the exponentially bounded $C$-semigroup $\{V(s): s \geq 0\}$. Consider the following stochastic differential equation on $\mathcal{B}^2(R^+)$

\begin{equation}
\left\{\begin{array}{l}
\frac{dY_t}{d t}=AY_t,~~ \forall t\geq 0, \\
Y_0=y \in C(D(A)),
\end{array}\right.
\end{equation}
due to Theorems 5.1 and 5.2, under the locally $L^0$-Lipschitz condition of the solution, we have that the stochastic differential equation $(5)$
possesses a unique solution $Y_t:=V(t) C^{-1}y$.

\end{example}


\noindent{\bf Acknowledgments:}

{\noindent The} study was supported by the National Natural Science Foundation of China (Grant No. 12171361) and the Humanities and Social Science Youth Foundation of Ministry of Education (Grant No. 20YJC790174).

\begin{spacing}{2.0}
\begin{large}
\noindent
{\bf AUTHOR DECLARATIONS}
\end{large}

{\noindent\bf Conflict of Interest}

{\noindent The} author has no conflicts to disclose.

{\noindent\bf Author Contributions}

{\noindent\bf Xia Zhang:} Writing - original draft (equal). {\bf Leilei Wei:} Writing - original draft (equal). {\bf Ming Liu:} Writing - original draft (equal).
\end{spacing}

\begin{spacing}{2.0}
\begin{large}
\noindent
{\bf  DATA AVAILABILITY}
\end{large}

{\noindent Data} sharing is not applicable to this article as no new data were created or analyzed in this study.
\end{spacing}

\bibliographystyle{be}

\begin{thebibliography}{1}
\bibitem{Engel2000} K.-J. Engel and R. Nagel,
\emph{One-Parameter Semigroups for Linear Evolution Equations}
(Springer-Verlag, New York, 2000).

\bibitem{Pazy} A. Pazy,
\emph{Semigroups of Linear Operators and Applications to Partial Differential Equations}
(Springer-Verlag, Berlin, 1983).

\bibitem{Pay} L. E. Payne,
\emph{Improperly Posed Problems in Partial Differential Equations}
(SIAM, Philadelphia, 1975).


\bibitem{deL1} R. Delaubenfels,
\emph{Existence Families, Functional Calculi and Evolution Equations}
(Springer-Verlag, Berlin, 1994).


\bibitem{deL2} R. Delaubenfels,
``
$C$-semigroups and the Cauchy problem,"
\textit{J. Funct. Anal.} \textbf{111}(1), 44-61 (1993).



\bibitem{deL3} R. Delaubenfels, G. Z. Sun and S. W. Wang,
``Regularized semigroups, existence families and the abstract Cauchy problem,"
\textit{Differential Integral Equations} \textbf{8}(6), 1477-1496 (1995). DOI: 10.57262/die/1368638176



\bibitem{B} N. Gigli,
``Lecture notes on differential calculus on RCD spaces,"
\textit{Publ. RIMS Kyoto Univ.} \textbf{54}, 855-918 (2018). DOI: 10.4171/PRIMS/54-4-4

\bibitem{C} N. Gigli,
``Nonsmooth differential geometry-an approach tailored for spaces with Ricci curvature bounded from below,"
\textit{Mem. Amer. Math. Soc.} \textbf{251}, 1196 (2018). DOI: 10.1090/memo/1196

\bibitem{Guopost} T. X. Guo,
``A new approach to random functional analysis,"
 \emph{in Proceedings of the First China Postdoctoral Academic Conference}
(The China National Defense and Industry Press, Beijing, 1993), pp. 1150-1154.

\bibitem{Guorelations} T. X. Guo,
``Relations between some basic results derived from two kinds of topologies for a random locally convex module,"
\textit{J. Funct. Anal.} \textbf{258}(9), 3024-3047 (2010). DOI: 10.1016/j.jfa.2010.02.002

\bibitem{Guo20241} T. X. Guo, Y. C. Wang, H. K. Xu, G. Yuan and G. Chen,
``A noncompact Schauder fixed point theorem in random normed modules and its applications,"
\textit{Math. Ann.} (2024). DOI: 10.1007/s00208-024-03017-1


\bibitem{GuoZhang20} T. X. Guo, E. X. Zhang, Y. C. Wang and Z. C. Guo,
``Two fixed point theorems in complete random normed modules and their applications to backward stochastic equations,"
\textit{J. Math. Anal. Appl.} \textbf{483}(2), 123644 (2020). DOI: 10.1016/j.jmaa.2019.123644

\bibitem{LM} M. Liu, X. Zhang and L. F. Dai,
``Trotter-Kato approximations of semilinear stochastic evolution equations in Hilbert spaces,"
\textit{J. Math. Phys.} \textbf{64}(4), 043506 (2023).

\bibitem{LZD} M. Liu, X. Zhang and L. F. Dai,
``Trotter-Kato approximations of impulsive neutral SPDEs in Hilbert spaces,"
\textit{Acta Math. Sin. (Engl. Ser.)} \textbf{40}(5), 1229-1243 (2024).

\bibitem{Skorohod} A. V. Skorohod,
\emph{Random Linear Operators}
(D. Reidel Publishing Company, Holland, 1984).


\bibitem{Guoprogress} T. X. Guo,
``Recent progress in random metric theory and its applications to conditional risk measures,"
\textit{Sci. China Ser. A} \textbf{54}(4), 633-660 (2011). DOI: 10.1007/s11425-011-4189-6

\bibitem{Guo2024} T. X. Guo, X. H. Mu and Q. Tu,
``The relations among the notions of various kinds of stability and their applications,"
\textit{Banach J. Math. Anal.} \textbf{18}, 42 (2024). DOI: 10.1007/s43037-024-00354-w



\bibitem{E} M. Lu\v{c}i\'{c}, E. Pasqualetto and I. Vojnovi\'{c},
``On the reflexivity properties of Banach bundles and Banach modules,"
\textit{Banach J. Math. Anal.} \textbf{18}, 7 (2024). DOI: 10.1007/s43037-023-00315-9

\bibitem{Guozhang} T. X. Guo and X. Zhang,
``Stone's representation theorem of a group of random unitary operators on complete complex random inner product modules,"
\textit{Sci. Sin. Math.} \textbf{42}(2), 181-202 (2012).


\bibitem{Zhangjapan} X. Zhang,
``On mean ergodic semigroups of random linear operators,"
\textit{Proc. Japan Acad. Ser. A} \textbf{88}(4), 53-58 (2012). DOI: 10.3792/pjaa.88.53

\bibitem{Z-L} X. Zhang and M. Liu,
``On almost surely bounded semigroups of random linear operators,"
\textit{J. Math. Phys.} \textbf{54}(5), 053517 (2013). DOI: 10.1063/1.4805049



\bibitem{Thang} D. H. Thang, T. C. Son and N. Thinh,
``Semigroups of continuous module homomorphisms on complex complete random normed modules,"
\textit{Lith. Math. J.} \textbf{59}(2), 229-250 (2019). DOI: 10.1007/s10986-019-09442-z


\bibitem{Z-L2019} X. Zhang, M. Liu and T. X. Guo,
``The Hille-Yosida generation theorem for almost surely bounded $C_0$-semigroups of continuous module homomorphisms,"
\textit{J. Nonlinear Convex Anal.} \textbf{21}(9), 1995-2009 (2020).

\bibitem{ZhangCauchy} X. Zhang, H. L. Zhang, B. Z. Wang and M. Liu,
``The Cauchy initial value problem in complete random normed modules,"
\textit{Math. Methods Appl. Sci.} \textbf{43}(7), 4007-4015 (2020). DOI: 10.1002/mma.6169


\bibitem{Son2023} T. C. Son, D. H. Thang and L. T. Oanh,
``Exponentially bounded $C$-semigroup and the Cauchy initial value problems in complete random normed modules,"
\textit{Acta Math. Sin. (Engl. Ser.)} \textbf{40}(9), 2195-2212 (2024).




\bibitem{Dunford} N. Dunford and J. T. Schwartz,
\emph{Linear Operators (I)}
(Interscience Publishers, New York, 1958).



\bibitem{SS} B. Schweizer and A. Sklar,
\emph{Probabilistic Metric Spaces}
(Elsevier, New York, 1983; Dover Publications, New York, 2005).

\end{thebibliography}

\end{document}